\numberwithin{equation}{section}
\numberwithin{figure}{section}
\theoremstyle{plain}
\newtheorem{thm}{\protect\theoremname}[section]
\theoremstyle{definition}
\newtheorem{defn}[thm]{\protect\definitionname}
\providecommand{\definitionname}{Definition}
\providecommand{\theoremname}{Theorem}
\begin{document}
\title{Polynomial central set theorem near zero}
\author{ANINDA CHAKRABORTY AND SAYAN GOSWAMI}
\email{anindachakraborty2@gmail.com}
\address{Government General Degree College at Chapra, Chapra, Nadia-741123,
West Bengal, India}
\email{sayan92m@gmail.com}
\address{Department of Mathematics, University of Kalyani, Kalyani, Nadia-741235,
West Bengal, India}
\begin{abstract}
{\normalsize{}N. Hindman and I. Leader introduced the set $0^{+}$
of ultrafilters on $\left(0,1\right)$ and characterize smallest ideal
of $\left(0^{+},+\right)$ and proved the Central Set Theorem near
$0$. Recently Polynomial Central Set Theorem has been proved by V.
Bergelson, J. H. Johnson Jr. and J. Moreira. In this article, we will
prove Polynomial Central Set Theorem near $0$.}{\normalsize\par}
\end{abstract}

\maketitle

\section{Introduction}

Given a discrete semigroup $\left(S,\cdot\right)$, it is well known
that one can extend the operation $\cdot$ to $\beta S$ , the Stone-\v{C}ech
compactification of $S$ so that $\left(\beta S,\cdot\right)$ is
a right topological semigroup (i.e. for each $p\in\beta S$, the function
$\rho_{p}:S\rightarrow S$, defined by $\rho_{p}\left(q\right)=q\cdot p$,
is continuous) with $S$ contained in the topological center (i.e.
for each $x\in S$, the function $\lambda_{x}:\beta S\rightarrow\beta S$,
defined by $\lambda_{x}\left(p\right)=x\cdot p$, is continuous).
Further, this operation has frequently proved to be useful in Ramsey
Theory. See \cite{key-7} for an elementary introduction to the semigroup
$\left(\beta S,\cdot\right)$ and its combinatorial applications.

It is also well known that if $S$ is not discrete, such an extension
may not be possible. (See Section 2 of \cite{key-6} where it is shown
how bad the situation is for any dense subsemigroup of $\left(\left[0,\infty\right],+\right)$.)

Surprisingly, however, it has turned out to be possible to use the
algebraic structure of $\beta\mathbb{R}_{d}$ to obtain Ramsey Theoretic
results that are stated in terms of the usual topology on $\mathbb{R}$.
(Given a topological space $X$, the notation $X_{d}$ represents
the set $X$ with the discrete topology.)

Specifically, consider the semigroup $\left(\left(0,1\right),\cdot\right)$
and let 
\[
0^{+}=\bigcap_{\epsilon>0}\mathit{cl}_{\beta\left(0,1\right)_{d}}\left(0,\epsilon\right).
\]
 Then $0^{+}$ is a two sided ideal of $\left(\beta\left(0,1\right)_{d},\cdot\right)$,
so contains the smallest ideal of $\left(\beta\left(0,1\right)_{d},\cdot\right)$
and it also a subsemigroup of $\left(\beta\mathbb{R}_{d},+\right)$.
As a compact right topological semigroup, $0^{+}$ has a smallest
two sided ideal.
\begin{defn}
\label{Definition 1.1} \cite[Definition 3.2]{key-6} Let $S$ be
a dense subsemigroup of $\left(\left(0,\infty\right),+\right)$.

(a) $K$ is the smallest ideal of $\left(0^{+},+\right)$.

(b) A subset $B$ of $S$ is syndetic near $0$ if and only if for
every $\epsilon>0$ there exist some $F\in\mathcal{P}_{f}\left(\left(0,\epsilon\right)\cap S\right)$
and some $\delta>0$ such that $S\cap\left(0,\delta\right)\subseteq\bigcup_{t\in F}\left(-t+B\right)$.
\end{defn}

The sets which are the elements of the ultrafilter in $K$ are of
great importance.
\begin{defn}
\label{Definition 1.2} \cite[Definition 3.4]{key-6} Let $S$ be
a dense subsemigroup of $\left(\left(0,\infty\right),+\right)$. A
subset $A$ of $S$ is piecewise syndetic near $0$ if and only if
there exist sequences $\left\langle F_{n}\right\rangle _{n=1}^{\infty}$
and $\left\langle \delta_{n}\right\rangle _{n=1}^{\infty}$ such that 

(1) for each $n\in\mathbb{N}$, $F_{n}\in\mathcal{P}_{f}\left(\left(0,1/n\right)\cap S\right)$
and $\delta_{n}\in\left(0,1/n\right)$ and 

(2) for all $G\in\mathcal{P}_{f}\left(S\right)$ and all $\mu>0$
there is some $x\in\left(0,\mu\right)\cap S$ such that for all $n\in\mathbb{N}$,
$\left(G\cap\left(0,\delta_{n}\right)\right)+x\subseteq\bigcup_{t\in F_{n}}\left(-t+A\right)$.
\end{defn}

\begin{thm}
\label{Theorem 1.3} \cite[Theorem 3.5]{key-6} Let $S$ be a dense
subsemigroup of $\left(\left(0,\infty\right),+\right)$ and let $A\subseteq S$.
Then $K\cap\mathit{cl}_{\beta S_{d}}A\neq\emptyset$ if and only if
$A$ is piecewise syndetic near $0$.
\end{thm}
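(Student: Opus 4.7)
The proof will mirror the classical equivalence ``piecewise syndetic $\iff$ $\overline{A}$ meets $K(\beta S)$'' (\cite{key-7}), transplanted to the near-zero framework, where $K$ now denotes the smallest ideal of the compact right topological semigroup $(0^{+},+)$. The pivotal preparatory fact is the following \emph{syndeticity-near-zero lemma}, which I will establish first: a set $B\subseteq S$ is syndetic near $0$ if and only if $\mathit{cl}_{\beta S_d}B\cap L\neq\emptyset$ for every minimal left ideal $L$ of $(0^{+},+)$.

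For $(\Leftarrow)$, given witnesses $\langle F_n\rangle_{n=1}^{\infty}$ and $\langle\delta_n\rangle_{n=1}^{\infty}$ as in Definition~\ref{Definition 1.2}, the nonempty, directed family
\[
E_{G,\mu}=\Bigl\{x\in(0,\mu)\cap S:\forall n\in\mathbb{N},\ (G\cap(0,\delta_n))+x\subseteq\bigcup_{t\in F_n}(-t+A)\Bigr\}
\]
(indexed by $G\in\mathcal{P}_f(S)$ and $\mu>0$) extends to an ultrafilter $q\in 0^{+}$. Taking $G=\{g\}$ and unpacking shows that $B:=\{y\in S:-y+A\in q\}$ is syndetic near $0$ with the very same pair $(F_n,\delta_n)$. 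By the lemma, pick $r\in K$ with $B\in r$ and set $p:=r+q$, which lies in $K+0^{+}\subseteq K$ and satisfies $A\in p$ by the definition of $B$.

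For $(\Rightarrow)$, suppose $p\in K\cap\mathit{cl}_{\beta S_d}A$. Let $L$ be the minimal left ideal of $(0^{+},+)$ through $p$ and let $L'$ be an arbitrary minimal left ideal. Minimality of $L$ applied to the left ideal $0^{+}+(q'+p)\subseteq L$ (for any $q'\in L'$) yields $L'+p=L$, so $p=r+p$ for some $r\in L'$. Then $B:=\{y:-y+A\in p\}$ contains $r$, hence meets every minimal left ideal of $0^{+}$ and is syndetic near $0$ by the lemma. Its syndeticity witnesses at scale $\epsilon=1/n$ (with $\delta_n$ shrunk if necessary to lie in $(0,1/n)$) provide the required $F_n\in\mathcal{P}_f((0,1/n)\cap S)$ and $\delta_n\in(0,1/n)$. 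To verify condition~(2) of Definition~\ref{Definition 1.2}, fix $G\in\mathcal{P}_f(S)$ and $\mu>0$: only finitely many $n$ satisfy $G\cap(0,\delta_n)\neq\emptyset$, and for each such pair $(n,g)$ the set $\bigcup_{t\in F_n}\bigl(-(g+t)+A\bigr)$ lies in $p$; the finite intersection of these with $(0,\mu)\cap S\in p$ is nonempty, and any element of it serves as the required~$x$.

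The main obstacle is the preparatory syndeticity-near-zero lemma itself. Its harder direction, starting from witnesses $F_\epsilon,\delta_\epsilon$ and an arbitrary $r'\in L'$, pigeonholes in $F_\epsilon$ to produce $t(\epsilon)\in F_\epsilon$ with $-t(\epsilon)+B\in r'$; taking $s:=\mathcal{U}\text{-}\lim_{\epsilon\to 0}t(\epsilon)\in 0^{+}$ along an ultrafilter $\mathcal{U}$ on $(0,1)$ with $(0,\eta)\in\mathcal{U}$ for all $\eta>0$, and invoking right-continuity of $\rho_{r'}$, gives $s+r'\in L'$ containing $B$. The converse extends the ``avoidance'' filter generated by $\{(0,\delta)\cap\bigcap_{t\in F}(S\setminus(-t+B)):F\in\mathcal{P}_f((0,\epsilon)\cap S),\ \delta>0\}$ (nonempty by the failure of syndeticity) to an ultrafilter in $0^{+}$, producing a nonempty closed left ideal of $0^{+}$ that entirely avoids $\mathit{cl}_{\beta S_d}B$.
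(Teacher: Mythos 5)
The paper does not prove Theorem \ref{Theorem 1.3} at all; it is quoted verbatim from Hindman--Leader \cite{key-6}, so there is no internal proof to compare against. Your argument is a correct reconstruction of the standard proof from that source: the auxiliary lemma (syndetic near $0$ if and only if the closure meets every minimal left ideal of $0^{+}$) is exactly the Hindman--Leader pivot, the ultrafilter $q$ built from the sets $E_{G,\mu}$ and the translate set $B=\{y:-y+A\in q\}$ give the reverse implication via $r+q\in K+0^{+}\subseteq K$, and the forward implication via $p=r+p$ with $r$ in an arbitrary minimal left ideal is the standard characterization of members of the smallest ideal; all steps check out, including the finiteness observation that $G\cap(0,\delta_n)=\emptyset$ for all but finitely many $n$.
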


The idempotents in $K$ are called minimal idempotents. Sets belongs
to the minimal idempotents are called ``central''.
\begin{defn}
\label{Definition 1.4} \cite[Definition 4.1]{key-6} Let $S$ be
a dense subsemigroup of $\left(\left(0,\infty\right),+\right)$.

(a) A set $A\subseteq S$ is central near $0$ if and only if there
is some idempotent $p\in K$ with $A\in p$.

(b) A family $\mathcal{A}\subseteq\mathcal{P}\left(S\right)$ is collectionwise
piecewise syndetic near $0$ if and only if there exist functions
\[
F:\mathcal{P}_{f}\left(\mathcal{A}\right)\longrightarrow\times_{n=1}^{\infty}\mathcal{P}_{f}\left(\left(0,1/n\right)\cap S\right)
\]
 and 
\[
\delta:\mathcal{P}_{f}\left(\mathcal{A}\right)\longrightarrow\times_{n=1}^{\infty}\left(0,1/n\right)
\]
 such that for every $\mu>0$, every $G\in\mathcal{P}_{f}\left(S\right)$,
and every $\mathcal{H}\in\mathcal{P}_{f}\left(\mathcal{A}\right)$,
there is some $t\in\left(0,\mu\right)\cap S$ such that for every
$n\in\mathbb{N}$ and every $\mathcal{F}\in\mathcal{P}_{f}\left(\mathcal{H}\right)$,
\[
\left(G\cap\left(0,\delta\left(\mathcal{F}\right)_{n}\right)\right)+t\subseteq\bigcup_{x\in F\left(\mathcal{F}\right)_{n}}\left(-x+\bigcap\mathcal{F}\right).
\]
\end{defn}

So every member of a family which is collectionwise piecewise syndetic
is piecewise syndetic. Here is a result of our interest:
\begin{thm}
\label{Theorem 1.5} \cite[Theorem 4.2]{key-6} Let $S$ be a dense
subsemigroup of $\left(\left(0,\infty\right),+\right)$ and let $A\in\mathcal{P}\left(S\right)$.
There exists $p\in K$ such that $\mathcal{A}\subseteq p$ if and
only if $\mathcal{A}$ is collectionwise piecewise syndetic near $0$.
\end{thm}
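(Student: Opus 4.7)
The plan is to prove the two implications by close analogy with the classical discrete result of Hindman and Strauss (Theorem~4.43 of their book), using Theorem~\ref{Theorem 1.3} in place of the discrete characterization of piecewise syndetic sets. The extra parameter $\mu$ in the near-$0$ setting must be tracked through every step, but the algebraic skeleton of the argument is the same.

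For necessity, assume $p\in K$ with $\mathcal{A}\subseteq p$. For each $\mathcal{F}\in\mathcal{P}_{f}(\mathcal{A})$, the set $\bigcap\mathcal{F}$ belongs to $p\in K$, hence is piecewise syndetic near $0$ by Theorem~\ref{Theorem 1.3}; use the witnessing sequences of Definition~\ref{Definition 1.2} to define $F(\mathcal{F})$ and $\delta(\mathcal{F})$. To verify the uniform clause of Definition~\ref{Definition 1.4}(b), fix $\mu,G,\mathcal{H}$ and show that the set of $t\in(0,\mu)\cap S$ satisfying the required inclusion simultaneously for every $\mathcal{F}\in\mathcal{P}_{f}(\mathcal{H})$ and $n\in\mathbb{N}$ is itself a member of $p$; this step uses that $p\in K$ forces, for each relevant $g$, some translate $-g-x+\bigcap\mathcal{F}$ to lie in $p$ via the minimal-ideal structure of $0^{+}$, so that a finite/countable intersection of such sets together with $(0,\mu)\in p$ produces the desired $t$.

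For sufficiency, assume $\mathcal{A}$ is collectionwise piecewise syndetic near $0$ with witnesses $F,\delta$. For each triple $(\mathcal{H},G,\mu)$ the set
\[
T_{\mathcal{H},G,\mu}=\bigl\{t\in(0,\mu)\cap S:(G\cap(0,\delta(\mathcal{F})_{n}))+t\subseteq{\textstyle\bigcup_{x\in F(\mathcal{F})_{n}}}(-x+\bigcap\mathcal{F})\ \forall\mathcal{F}\subseteq\mathcal{H},\,n\in\mathbb{N}\bigr\}
\]
is nonempty by hypothesis, and $\{T_{\mathcal{H},G,\mu}\}$ is closed under enlarging $\mathcal{H},G$ and shrinking $\mu$. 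I would generate a filter on $S_{d}$ containing $\mathcal{A}$, the intervals $(0,\mu)\cap S$, and suitably shifted versions of the $T_{\mathcal{H},G,\mu}$; the key point is verifying the finite intersection property. Any ultrafilter $p$ extending this filter lies in $0^{+}$ and contains $\mathcal{A}$, and $p\in K$ follows from Theorem~\ref{Theorem 1.3} by checking every $B\in p$ is piecewise syndetic near $0$, with the witnessing sequences for $B$ built from $F(\mathcal{F}),\delta(\mathcal{F})$ for an appropriate $\mathcal{F}\in\mathcal{P}_{f}(\mathcal{A})$ whose intersection is forced to sit inside $B$ by the filter construction.

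The main obstacle is the sufficiency direction, specifically the careful design of the filter base so that (i) it has the finite intersection property with $\mathcal{A}$ and (ii) every ultrafilter extension has all its members piecewise syndetic near $0$. Both aspects rely on Definition~\ref{Definition 1.4}(b) non-transparently: the quantification ``for all $\mathcal{F}\subseteq\mathcal{H}$'' (rather than $\mathcal{F}=\mathcal{H}$ alone) is essential in arranging that $B\in p$ implies piecewise syndeticity of $B$, since smaller $\mathcal{F}$ produce looser inclusions into $\bigcap\mathcal{F}\supseteq B$ that can be used to build witnesses for $B$ itself.
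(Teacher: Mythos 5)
The first thing to note is that the paper does not prove Theorem \ref{Theorem 1.5} at all: it is quoted from Hindman and Leader \cite{key-6} (their Theorem 4.2), so your proposal has to be measured against that source's argument, which is the near-zero analogue of the discrete characterization of collectionwise piecewise syndetic families. Your necessity half is close in spirit but starts from the wrong data: reading $F(\mathcal{F})$ and $\delta(\mathcal{F})$ off from the piecewise syndeticity of $\bigcap\mathcal{F}$ (Definition \ref{Definition 1.2}) gives you no handle on the ultrafilter $p$, and the uniform clause of Definition \ref{Definition 1.4}(b) would then be unverifiable. The correct move is to use that $p\in K$ makes $B(\mathcal{F})=\{x\in S:-x+\bigcap\mathcal{F}\in p\}$ syndetic near $0$ and to choose $F(\mathcal{F})_{n}$ and $\delta(\mathcal{F})_{n}$ from that syndeticity; then for each $g\in G\cap(0,\delta(\mathcal{F})_{n})$ there is $x\in F(\mathcal{F})_{n}$ with $-(x+g)+\bigcap\mathcal{F}\in p$, only finitely many pairs $(g,n)$ give a non-vacuous condition (since $\delta(\mathcal{F})_{n}<1/n$ eventually falls below $\min G$), and intersecting these members of $p$ with $(0,\mu)\cap S\in p$ supplies the common $t$. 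You gesture at this mechanism, so I would call the necessity half repairable.

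The sufficiency half has a genuine gap. You propose to certify $p\in K$ by checking that every $B\in p$ is piecewise syndetic near $0$ and invoking Theorem \ref{Theorem 1.3}. That theorem only says each such $B$ has closure meeting $K$; by compactness this places $p$ in $\mathit{cl}\,K$, and $K$, the smallest ideal of $(0^{+},+)$, need not be closed, so this does not put $p$ in $K$. (The set of ultrafilters in $0^{+}$ all of whose members are piecewise syndetic near $0$ is exactly $\mathit{cl}\,K$, which in general properly contains $K$.) The actual argument does not put $\mathcal{A}$ into a filter at the outset. One takes an ultrafilter $q$ containing all the sets $T_{\mathcal{H},G,\mu}$ (these do form a filter base, by directedness in $\mathcal{H}$, $G$, $\mu$), observes that the induced left ideal of $0^{+}$ is contained in $\bigcap_{\mathcal{F},n}\mathit{cl}\bigl(\bigcup_{x\in F(\mathcal{F})_{n}}(-x+\bigcap\mathcal{F})\bigr)$, chooses $r$ in a minimal left ideal inside it, extracts for each $\mathcal{F}$ some $x_{\mathcal{F}}\in F(\mathcal{F})_{n}$ with $\bigcap\mathcal{F}\in x_{\mathcal{F}}+r$, and finally takes a cluster point $p$ of the net $\langle x_{\mathcal{F}}+r\rangle$ indexed by the directed set $\mathcal{P}_{f}(\mathcal{A})$; since the translates stay in the closed minimal left ideal and the $x_{\mathcal{F}}$ can be taken arbitrarily small, this $p$ lies in $K\cap 0^{+}$ and contains every $A\in\mathcal{A}$. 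Without this left-ideal-and-cluster-point step, your filter construction cannot reach $K$ itself.
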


Now, we need to recall some definitions from \cite{key-6}.
\begin{defn}
\label{Definition 1.6} \cite[Definition 2.4]{key-6} Let $S$ be
a subsemigroup of $\left(\mathbb{R},+\right)$. 

(a) Let $\alpha:\beta S_{d}\rightarrow\left[-\infty,\infty\right]$
be the continuous extension of the identity function.

(b) $B\left(S\right)=\,\left\{ p\in\beta S_{d}:\alpha\left(s\right)\notin\left\{ -\infty,\infty\right\} \right\} $.

(c) Given $x\in\mathbb{R}$, 
\[
x^{+}=\left\{ p\in B\left(S\right):\alpha\left(p\right)=x\text{ and }\left(x,\infty\right)\cap S\in p\right\} \quad\text{and}
\]
\[
x^{-}=\left\{ p\in B\left(S\right):\alpha\left(p\right)=x\text{ and }\left(-\infty,x\right)\cap S\in p\right\} .
\]

(d) $U=\bigcup_{x\in\mathbb{R}}x^{+}$ and $D=\bigcup_{x\in\mathbb{R}}x^{-}$.
\end{defn}

The set $B\left(S\right)$ is the set of bounded ultrafilters on $S$
. That is, an ultrafilter $p\in\beta S_{d}$ is in $B\left(S\right)$
if and only if there is some $n\in\mathbb{N}$ with $\left[-n,n\right]\cap S\in p$.
The following one is the combinatorial characterization of central
set which we will use in later:
\begin{thm}
\label{Theorem 1.7} \cite[Theorem 4.7]{key-6} Let $S$ be a dense
subsemigroup of $\left(\left(0,\infty\right),+\right)$ and let $A\subseteq S$
. Statements (a) and (b) are equivalent.

(a) $A$ is central near $0$.

(b) There is a decreasing sequence $\left\langle C_{n}\right\rangle _{n=1}^{\infty}$
subsets of $A$ such that 
\begin{enumerate}
\item for all $n\in\mathbb{N}$ and all $x\in C_{n}$, there is some $m\in\mathbb{N}$
with $C_{m}\subseteq-x+C_{n}$ and 
\item $\left\{ C_{n}:n\in\mathbb{N}\right\} $ is collectionwise piecewise
syndetic near $0$.
\end{enumerate}
\end{thm}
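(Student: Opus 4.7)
The plan is to prove both directions using compact--semigroup methods, with the common object being the set $T:=\bigcap_{n=1}^{\infty}\mathit{cl}_{\beta S_d}(C_n)\cap 0^+$.

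The direction $(b)\Rightarrow(a)$ is relatively direct. Theorem~\ref{Theorem 1.5} applied to hypothesis~(2) produces $p\in K$ with every $C_n\in p$, so $T$ is non-empty and already meets $K$. Each $\mathit{cl}_{\beta S_d}(C_n)$ is closed, as is $0^+$ in $\beta\mathbb{R}_d$, so $T$ is compact. To verify that $T$ is a subsemigroup of $0^+$, take $r_1,r_2\in T$ and $n\in\mathbb{N}$; for every $x\in C_n$, hypothesis~(1) gives $m$ with $C_m\subseteq -x+C_n$, and $C_m\in r_2$ forces $-x+C_n\in r_2$, so $C_n\subseteq\{x\in S:-x+C_n\in r_2\}\in r_1$, i.e.\ $C_n\in r_1+r_2$, with $r_1+r_2\in 0^+$ since $0^+$ is a subsemigroup of $\beta\mathbb{R}_d$. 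Invoking the standard fact that a closed subsemigroup $T$ of a compact right-topological semigroup $\Sigma$ meeting $K(\Sigma)$ satisfies $K(T)=T\cap K(\Sigma)$, pick an idempotent $p'\in K(T)\subseteq K$; this is a minimal idempotent of $0^+$ with $C_1\subseteq A$ inside it, so $A$ is central near $0$.

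For $(a)\Rightarrow(b)$, fix an idempotent $p\in K$ with $A\in p$. Using $A^{*}:=\{x\in A:-x+A\in p\}\in p$ together with $p\in 0^+$, build $\langle x_n\rangle\subseteq A$ and $\langle B_n\rangle$ by the Galvin--Glazer recipe: $B_0=A^{*}$ and, having $B_{n-1}\in p$, pick $x_n\in B_{n-1}^{*}\cap(0,2^{-n})$ (non-empty since both sets lie in $p\in 0^+$) and set $B_n=B_{n-1}\cap(-x_n+B_{n-1})\in p$. A standard induction gives $x_n\to 0$ and $FS(\langle x_k\rangle_{k\geq n})\subseteq B_{n-1}\subseteq A$, and the smallness of $x_n$ forces $C_n:=FS(\langle x_k\rangle_{k\geq n})\subseteq(0,2^{1-n})$. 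Condition~(1) is then transparent from the $FS$-structure: for $x=x_{k_1}+\cdots+x_{k_j}\in C_n$ with $n\leq k_1<\cdots<k_j$, take $m=k_j+1$; any $y=x_{l_1}+\cdots+x_{l_i}\in C_m$ satisfies $l_1\geq m>k_j$, so $x+y$ is a sum of $x_k$'s with strictly increasing indices all $\geq n$, whence $x+y\in C_n$ and $C_m\subseteq -x+C_n$.

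The \emph{main obstacle} lies in verifying condition~(2): by Theorem~\ref{Theorem 1.5} it suffices to find some $q\in K$ with every $C_n\in q$. Since $C_n\subseteq(0,2^{1-n})$, the set $T=\bigcap_n\mathit{cl}_{\beta S_d}(C_n)$ is already contained in $0^+$, and the $FS$-structure together with condition~(1) makes $T$ a closed (hence compact) subsemigroup of $0^+$ by the subsemigroup argument of the other direction; Ellis then yields an idempotent $q\in T$. The difficulty is forcing $q\in K$: the Galvin--Glazer recursion only gives $FS(\langle x_k\rangle_{k\geq n})\subseteq B_{n-1}\in p$ rather than $FS(\langle x_k\rangle_{k\geq n})\in p$, so $p\in T$ is not automatic and $T\cap K\neq\emptyset$ must be secured separately. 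I expect to close this gap by refining the recursion so that each $x_n$ is drawn not only from $B_{n-1}^{*}\cap(0,2^{-n})$ but also from a distinguished set belonging to a fixed minimal left ideal of $0^+$ through $p$, thereby forcing the tail $FS$-sets to accumulate inside a single minimal idempotent $q\in K$; once $T\cap K\neq\emptyset$, the structural fact $K(T)=T\cap K$ supplies an idempotent in $K$ containing every $C_n$, which by Theorem~\ref{Theorem 1.5} gives condition~(2).
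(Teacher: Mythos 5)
This theorem is quoted by the paper from \cite[Theorem 4.7]{key-6} without proof, so there is no in-paper argument to compare against; judging your proposal on its own terms, the direction $(b)\Rightarrow(a)$ is correct and is the standard argument: Theorem~\ref{Theorem 1.5} puts all the $C_n$ into some $p\in K\subseteq 0^{+}$, your verification that $T=\bigcap_{n}\mathit{cl}_{\beta S_d}(C_n)\cap 0^{+}$ is a compact subsemigroup is right, and the fact that a compact subsemigroup meeting the smallest ideal has its own smallest ideal equal to that intersection (so contains an idempotent of $K$) finishes it.

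The direction $(a)\Rightarrow(b)$, however, is not a proof, and the gap you flag is not a technical loose end but a sign that the approach is wrong. Condition~(2) forces every $C_n$ to be piecewise syndetic near $0$ (as the paper notes after Definition~\ref{Definition 1.4}), equivalently by Theorem~\ref{Theorem 1.5} all the $C_n$ must lie in a single member of $K$. The Galvin--Glazer tails $C_n=FS\left(\langle x_k\rangle_{k\geq n}\right)$ of a summable sequence are extremely thin sets --- they have multiplicatively huge gaps at every scale (e.g.\ $FS$ meets no point of $\left(\sum_{k>N}x_k,\,x_N\right)$ once the tail sums are small relative to $x_N$) --- and in general such a set is not piecewise syndetic near $0$ at all, so no refinement of how the $x_n$ are chosen inside sets ``belonging to a fixed minimal left ideal'' can make the family collectionwise piecewise syndetic; individual points of $S$ do not carry that kind of information, and $T\cap K=\emptyset$ is the generic situation for these tails. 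The repair is to abandon $FS$-sets entirely and make the $C_n$ as \emph{large} as possible, namely members of the minimal idempotent $p$ itself: setting $B^{\star}=\{x\in B:-x+B\in p\}$ for $B\in p$, one has $B^{\star}\in p$ and $-x+B^{\star}\in p$ for every $x\in B^{\star}$, and one builds a decreasing sequence $C_n\in p$ with $C_n\subseteq(0,1/n)$ by starring and by intersecting, at stage $n$, with the finitely many sets $-x+C_k$ ($k\leq n$) for the elements $x$ handled so far under an enumeration of $S$. Then condition~(2) is immediate from Theorem~\ref{Theorem 1.5} with $q=p\in K$, and the bookkeeping delivers condition~(1); the only price is that this uses an enumeration of $S$, which is harmless in the countable setting where the theorem is applied.
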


\begin{defn}
\label{Definition 1.8} \cite[Definition 4.9]{key-6} Let $S$ be
a countable dense subsemigroup of $\left(\left(0,\infty\right),+\right)$.

(a) $\Phi=\left\{ f|f:\mathbb{N}\rightarrow\mathbb{N}\text{ and for all }n\in\mathbb{N},\,f\left(n\right)\leq n\right\} $. 

(b) 
\begin{align*}
\mathcal{Y} & =\left\{ \left\langle \left\langle y_{i,t}\right\rangle _{t=1}^{\infty}\right\rangle _{i=1}^{\infty}|\text{ for each }i\in\mathbb{N},\right.\\
 & \left.\left\langle y_{i,t}\right\rangle _{t=1}^{\infty}\text{ is a sequence in }S\cup-S\cup\left\{ 0\right\} \text{ and }\sum_{t=1}^{\infty}\left|y_{i,t}\right|\text{ converges}\right\} .
\end{align*}
\end{defn}

\begin{thm}
\label{Theorem 1.9}\cite[Theorem 4.11]{key-6} Let $S$ be a countable
dense subsemigroup of $\left(\left(0,\infty\right),+\right)$ and
let $A$ be central near $0$ in $S$. Let $Y=\left\langle \left\langle y_{i,t}\right\rangle _{t=1}^{\infty}\right\rangle _{i=1}^{\infty}\in\mathcal{Y}$.
Then there exist sequences $\left\langle a_{n}\right\rangle _{n=1}^{\infty}$
in $S$ and $\left\langle H_{n}\right\rangle _{n=1}^{\infty}$ in
$\mathcal{P}_{f}\left(\mathbb{N}\right)$ such that

(a) for each $n\in\mathbb{N}$, $a_{n}<1/n$ and $\max H_{n}<\min H_{n+1}$
and

(b) for each $f\in\Phi$, $FS\left(\left\langle a_{n}+\sum_{t\in H_{n}}y_{f\left(n\right),t}\right\rangle _{n=1}^{\infty}\right)\subseteq A$.
\end{thm}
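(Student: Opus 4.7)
The plan is to prove Theorem~\ref{Theorem 1.9} by induction on $n$, adapting the classical idempotent-ultrafilter proof of the Central Sets Theorem to the near-zero semigroup $0^+$. Since $A$ is central near~$0$, I fix a minimal idempotent $p \in K$ with $A \in p$ and set $A^{\star} = \{x \in A : -x + A \in p\}$; the standard manipulation gives $A^{\star} \in p$ and, for each $x \in A^{\star}$, $-x + A^{\star} \in p$. Because $p \in 0^+$, we also have $(0, 1/n) \cap S \in p$ for every $n \in \mathbb{N}$. All inductive invariants will be phrased in terms of $A^{\star}$.

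At stage $n$, I assume $a_1, \ldots, a_{n-1} \in S$ and $H_1, \ldots, H_{n-1} \in \mathcal{P}_f(\mathbb{N})$ have been chosen so that every FS-sum $\sum_{i \in K}\bigl(a_i + \sum_{t \in H_i} y_{f(i),t}\bigr)$ with $\varnothing \neq K \subseteq \{1,\ldots,n-1\}$ and $f \in \Phi$ already lies in $A^{\star}$. Let $E_{n-1}$ denote this finite collection of previous FS-sums and put
\[
    M_n \;=\; A^{\star} \cap (0, 1/n) \cap S \cap \bigcap_{b \in E_{n-1}} (-b + A^{\star}) \;\in\; p.
\]
Preserving the invariant then reduces to finding $a_n \in S$ and $H_n \in \mathcal{P}_f(\mathbb{N})$ with $\min H_n > \max H_{n-1}$ such that $a_n + \sum_{t \in H_n} y_{j,t} \in M_n$ for every $j \in \{1, \ldots, n\}$ simultaneously; since $f(n) \leq n$ for every $f \in \Phi$, this single joint choice handles all admissible values of $f(n)$.

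The principal obstacle is producing a single pair $(a_n, H_n)$ that serves every $j \leq n$ at once. My plan is to work in $\beta G_d$, where $G$ is the additive subgroup of $\mathbb{R}$ generated by $S$, and to combine two inputs: the absolute convergence $\sum_{t=1}^{\infty} |y_{j,t}| < \infty$, which places every tail $FS(\langle y_{j,t}\rangle_{t=k}^{\infty})$ in an arbitrarily small neighborhood of $0$ once $k$ is large; and the idempotency of $p$, which yields $M_n^{\star} = \{x \in M_n : -x + M_n \in p\} \in p$ with $-x + M_n^{\star} \in p$ for each $x \in M_n^{\star}$. Concretely, I would apply Galvin--Glazer to the vector sequence $\vec{y}_t = (y_{1,t}, \ldots, y_{n,t})$ inside the compact right-topological subsemigroup $\bigcap_k \mathit{cl}_{\beta(G^n)_d} FS(\langle \vec{y}_t\rangle_{t=k}^{\infty})$ of $\beta(G^n)_d$ to obtain an idempotent $\vec{q}$ near the origin of $G^n$ that sees every common tail, then argue that the diagonal image of $M_n^{\star}$ belongs to $\Delta p + \vec{q}$ in $\beta(G^n)_d$; unpacking the ultrafilter-addition definition extracts first $a_n \in M_n^{\star} \cap (0, 1/n)$ and then a common $H_n \subseteq (\max H_{n-1}, \infty)$ with $a_n + \sum_{t \in H_n} y_{j,t} \in M_n$ for each $j \leq n$. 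This interleaving of $\Delta p$ and $\vec{q}$, with the near-zero constraint $a_n < 1/n$ absorbed into $M_n \subseteq (0, 1/n)$, is the technical heart of the argument and parallels the classical Central Sets Theorem proof in \cite{key-7}. Once the induction concludes, for any $f \in \Phi$ every FS-sum of $\langle a_n + \sum_{t \in H_n} y_{f(n),t}\rangle_{n=1}^{\infty}$ satisfies the invariant and hence lies in $A^{\star} \subseteq A$.
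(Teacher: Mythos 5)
The paper itself gives no proof of Theorem \ref{Theorem 1.9}; it is quoted verbatim from \cite[Theorem 4.11]{key-6}, so your attempt has to be measured against the Hindman--Leader argument. Your setup is the right one and matches theirs: fix a minimal idempotent $p\in K$ with $A\in p$, pass to $A^{\star}=\{x\in A:-x+A\in p\}$, use $(0,1/n)\cap S\in p$ (since $p\in 0^{+}$) to force $a_{n}<1/n$, and reduce the induction step to producing a single pair $(a_{n},H_{n})$ with $\min H_{n}>\max H_{n-1}$ such that $a_{n}+\sum_{t\in H_{n}}y_{j,t}\in M_{n}$ simultaneously for all $j\le n$. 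That reduction is correct and is exactly where the real work lies.

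The gap is in your mechanism for that step. First, as literally stated the claim is false: $-\Delta a+\Delta[M_{n}^{\star}]$ is a subset of the diagonal of $G^{n}$, while $\vec{q}$ contains $FS\left(\langle\vec{y}_{t}\rangle_{t=k}^{\infty}\right)$, which in general meets the diagonal nowhere (the coordinates $\sum_{t\in H}y_{j,t}$ differ with $j$); so the diagonal image of $M_{n}^{\star}$ cannot belong to $\Delta p+\vec{q}$. Presumably you mean the product set $M_{n}^{\times n}$, but then the membership $M_{n}^{\times n}\in\Delta p+\vec{q}$ is precisely the content of the theorem and you give no argument for it. Unwinding it, you would need $\bigl\{a:\prod_{j\le n}(-a+M_{n})\in\vec{q}\bigr\}\in p$, and nothing about the idempotency of $p$ and $\vec{q}$ delivers this: your sketch never actually uses minimality of $p$ after the definition of $A^{\star}$, yet the theorem fails for sets belonging to non-minimal idempotents, so no argument of this shape can close. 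The correct route (both in \cite{key-6} and in the classical proof in \cite{key-7}) is structural: let $I_{k}$ be the set of tuples $\Delta a+\left(\sum_{t\in H}y_{1,t},\ldots,\sum_{t\in H}y_{n,t}\right)$ with $a\in S\cap(0,1/n)$ and $\min H>k$, show that $\overline{I_{k}}$ is an ideal of the compact subsemigroup $\overline{\Delta[S]}\cup\overline{I_{k}}$ of $\beta(S^{n})_{d}$, observe that $\Delta p$ is a minimal idempotent of that subsemigroup because $p$ is minimal in $0^{+}$, and conclude $\Delta p\in K\bigl(\overline{\Delta[S]}\cup\overline{I_{k}}\bigr)\subseteq\overline{I_{k}}$, so that every member of $\Delta p$ --- in particular $(M_{n}^{\star})^{\times n}$ --- meets $I_{k}$. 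Your auxiliary idempotent $\vec{q}$ is not needed once this is in place; the convergence of $\sum_{t}|y_{j,t}|$ enters only to keep the constructed sums in $S$ and near $0$, not to manufacture $H_{n}$.
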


Given an infinite set $X$, we denote by $\mathcal{P}_{f}\left(X\right)$
the family of all finite non-empty subsets of $X$, i.e., $\mathcal{F}\left(X\right)=\left\{ \alpha\subseteq X:0<\left|\alpha\right|<\infty\right\} $.
Let $G$ be a countable commutative semigroup and let $\left\langle x_{n}\right\rangle _{n\in\mathbb{N}}$
be an injective sequence in $G$. For each $\alpha\in\mathcal{P}_{f}\left(\mathbb{N}\right)$
define $x_{\alpha}=\sum_{n\in\alpha}x_{n}$. The IP-set generated
by $\left\langle x_{n}\right\rangle _{n\in\mathbb{N}}$ is the set
$FS\left\langle x_{n}\right\rangle =\left\{ x_{\alpha}:\alpha\in\mathcal{P}_{f}\left(\mathbb{N}\right)\right\} $.
Clearly $x_{\alpha\cup\beta}=x_{\alpha}+x_{\beta}$ for any disjoint
$\alpha,\beta\in\mathcal{P}_{f}\left(\mathbb{N}\right)$. Moreover,
if $\left\langle y_{\alpha}\right\rangle _{\alpha\in\mathcal{P}_{f}\left(\mathbb{N}\right)}$
is any sequence indexed by $\mathcal{P}\left(\mathbb{N}\right)$ such
that $x_{\alpha\cup\beta}=x_{\alpha}+x_{\beta}$ for any disjoint
$\alpha,\beta\in\mathcal{P}_{f}\left(\mathbb{N}\right)$, then the
set $\left\{ y_{\alpha}:\alpha\in\mathcal{P}_{f}\left(\mathbb{N}\right)\right\} $
is an IP-set (generated by $\left\langle y_{\left\{ n\right\} }\right\rangle _{n\in\mathbb{N}}$).
For this reason we will denote IP-sets by $\left\langle y_{\alpha}\right\rangle _{\alpha\in\mathcal{P}_{f}\left(\mathbb{N}\right)}$,
with the understanding that they are generated by the singletons $y_{n},n\in\mathbb{N}$.
\begin{defn}
\label{Definition 1.10} Let $\left\langle x_{\alpha}\right\rangle _{\alpha\in\mathcal{P}_{f}\left(\mathbb{N}\right)},\left\langle y_{\alpha}\right\rangle _{\alpha\in\mathcal{P}_{f}\left(\mathbb{N}\right)}$
be IP-sets in a countable commutative semigroup $G$. 
\begin{enumerate}
\item We define a partial order on $\mathcal{P}_{f}\left(\mathbb{N}\right)$
by letting $\alpha<\beta$ whenever $\alpha,\beta\in\mathcal{P}_{f}\left(\mathbb{N}\right)$
and $\max_{i\in\alpha}i<\min_{j\in\beta}j$.
\item We say that $\left\langle x_{\alpha}\right\rangle _{\alpha\in\mathcal{P}_{f}\left(\mathbb{N}\right)}$
is a sub-IP-set of $\left\langle y_{\alpha}\right\rangle _{\alpha\in\mathcal{P}_{f}\left(\mathbb{N}\right)}$
if there exist $\alpha_{1}<\alpha_{2}<\cdots$ in $\mathcal{P}_{f}\left(\mathbb{N}\right)$
such that $x_{n}=y_{\alpha_{n}}$ for all $n\in\mathbb{N}$.
\end{enumerate}
The following theorem is important:
\end{defn}

\begin{thm}
\label{Theorem 1.11} \cite[Theorem 3.1]{key-6} Let $S$ be a dense
subsemigroup of $\left(\left(0,\infty\right),+\right)$ and let $A\subseteq S$.
There exists $p=p+p$ in $0^{+}$ with $A\in p$ iff there is some
sequence $\langle x_{n}\rangle_{n=1}^{\infty}$ in $S$ such that
$\sum_{n=1}^{\infty}x_{n}$ converges and $FS\left(\langle x_{n}\rangle_{n=1}^{\infty}\right)\subseteq A$
\end{thm}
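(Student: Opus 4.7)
The plan is to adapt the classical Galvin--Glazer argument that characterises members of idempotent ultrafilters in terms of $FS$-sets, inserting at each step the extra ``smallness'' control that comes from working inside $0^{+}$. The forward direction is an inductive construction of the sequence; the backward direction realises an idempotent inside a carefully chosen compact subsemigroup of $0^{+}$.

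For the direction ($\Rightarrow$), assume $p=p+p\in 0^{+}$ and $A\in p$. Set $A^{\star}=\{x\in A: -x+A\in p\}$. By the standard lemma for idempotents, $A^{\star}\in p$ and for each $x\in A^{\star}$ one has $-x+A^{\star}\in p$. I will choose $x_{1},x_{2},\dots$ inductively so that $x_{n}<2^{-n}$, $x_{n}\in A^{\star}$, and for every nonempty $F\subseteq\{1,\dots,n\}$, $\sum_{i\in F}x_{i}\in A^{\star}$. Given $x_{1},\dots,x_{n}$, set
\[
B_{n}=A^{\star}\cap\bigcap_{\emptyset\neq F\subseteq\{1,\dots,n\}}\Bigl(-\sum_{i\in F}x_{i}+A^{\star}\Bigr).
\]
The induction hypothesis combined with the property of $A^{\star}$ shows $B_{n}\in p$, and the hypothesis $p\in 0^{+}$ gives $(0,2^{-(n+1)})\cap S\in p$, so $B_{n}\cap(0,2^{-(n+1)})\cap S$ is nonempty; any element of it serves as $x_{n+1}$. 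The resulting sequence satisfies $\sum x_{n}<1$, and $FS(\langle x_{n}\rangle)\subseteq A^{\star}\subseteq A$ by construction.

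For the direction ($\Leftarrow$), given the sequence, define
\[
T=\bigcap_{n=1}^{\infty}\mathrm{cl}_{\beta S_{d}}FS(\langle x_{k}\rangle_{k=n}^{\infty}).
\]
$T$ is a nonempty closed subset of $\beta S_{d}$ by the finite intersection property. Since $\sum x_{n}$ converges, for every $\epsilon>0$ there is $N$ with $FS(\langle x_{k}\rangle_{k=N}^{\infty})\subseteq(0,\epsilon)$, whence $T\subseteq\mathrm{cl}_{\beta S_{d}}(0,\epsilon)$ for all $\epsilon>0$, i.e.\ $T\subseteq 0^{+}$. To see $T$ is a subsemigroup, fix $p,q\in T$ and $n\in\mathbb{N}$. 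For any $y\in FS(\langle x_{k}\rangle_{k=n}^{\infty})$, writing $y=\sum_{i\in F}x_{i}$ and setting $m=(\max F)+1$, disjointness of indices yields $FS(\langle x_{k}\rangle_{k=m}^{\infty})\subseteq -y+FS(\langle x_{k}\rangle_{k=n}^{\infty})$; since $q\in T$ contains $FS(\langle x_{k}\rangle_{k=m}^{\infty})$, we obtain $-y+FS(\langle x_{k}\rangle_{k=n}^{\infty})\in q$. Thus $FS(\langle x_{k}\rangle_{k=n}^{\infty})\subseteq\{y:-y+FS(\langle x_{k}\rangle_{k=n}^{\infty})\in q\}$, and the latter, being a superset of a member of $p$, belongs to $p$; this proves $FS(\langle x_{k}\rangle_{k=n}^{\infty})\in p+q$, so $p+q\in T$. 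Since $T$ is a nonempty compact right-topological subsemigroup (it inherits right continuity from $\beta S_{d}$), Ellis--Numakura supplies an idempotent $p\in T\subseteq 0^{+}$, and $FS(\langle x_{n}\rangle)\subseteq A$ with $FS(\langle x_{n}\rangle)\in p$ give $A\in p$.

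The only delicate point is to verify $T$ is a subsemigroup in the right-topological semigroup $(\beta S_{d},+)$, but the disjoint-index trick above is exactly the one used in the classical proof over $\mathbb{N}$; the new ingredient here is merely that convergence of $\sum x_{n}$ automatically drops $T$ into $0^{+}$. Conversely, the main technical care in the forward direction is to interleave the size constraint $x_{n}<2^{-n}$ with the $A^{\star}$-membership at each inductive step, which succeeds precisely because $p\in 0^{+}$ guarantees $(0,2^{-(n+1)})\cap S$ is a $p$-large set.
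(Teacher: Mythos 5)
Your proof is correct and follows essentially the same route as the source the paper cites for this result (Hindman--Leader, Theorem 3.1): the Galvin--Glazer idempotent argument with the extra constraint $x_{n}<2^{-n}$ supplied by $(0,\epsilon)\cap S\in p$ for the forward direction, and the compact subsemigroup $\bigcap_{n}\mathrm{cl}_{\beta S_{d}}FS\left(\langle x_{k}\rangle_{k=n}^{\infty}\right)\subseteq 0^{+}$ together with Ellis--Numakura for the converse. No gaps to report.
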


\section{Proof of our results}

In \cite{key-6} the authors proved van der Waerden's Theorem near
$0$ and Central Set Theorem near $0$. Polynomial van der Waerden's
Theorem proved in \cite{key-1.1}, \cite{key-9} and Polynomial van
der Waerden's Theorem near $0$ proved in \cite{key-3}. Traditionally,
van der Waerden\textquoteright s Theorem can be proved from Hales-Jewett
Theorem which is treated as combinatorial proof. 

Conventionally $\left[t\right]$ denotes the set $\left\{ 1,2,\ldots,t\right\} $
and words of length $N$ over the alphabet $\left[t\right]$ are the
elements of $\left[t\right]^{N}$ . A variable word is a word over
$\left[t\right]\cup\left\{ *\right\} $ where $*$ denotes the variable.
A combinatorial line is denoted by $L_{\tau}=\left\{ \tau\left(1\right),\tau\left(2\right),\ldots,\tau\left(t\right)\right\} $
where $\tau$ is a variable word and $L_{\tau}$ is obtained by replacing
the variable $*$ by $1,2,\ldots,t$. The following theorem is due
to Hales-Jewett \cite{key-5}.
\begin{thm}
\label{Theorem 2.1} \cite{key-5} For all values $t,r\in\mathbb{N}$
there exists a number $HJ\left(r,t\right)$ such that, if $N\geq HJ\left(r,t\right)$
and $\left[t\right]^{N}$ is $r$ colored then there exists a monochromatic
combinatorial line.
\end{thm}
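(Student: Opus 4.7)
The plan is to prove the Hales--Jewett theorem by induction on the alphabet size $t$, with the number of colours $r$ appearing as a free parameter at each stage of the induction. The base case $t=1$ is immediate: $[1]^{N}$ consists only of the constant word $(1,1,\ldots,1)$, which is itself the combinatorial line determined by the variable word $**\cdots *$. For the inductive step I would assume that $HJ(r',t-1)$ is finite for every $r'\in\mathbb{N}$ and construct $HJ(r,t)$ as a sum of carefully chosen block lengths $N=M_{1}+M_{2}+\cdots+M_{r}$, writing every word in $[t]^{N}$ as a concatenation $w_{1}w_{2}\cdots w_{r}$ with $w_{k}\in[t]^{M_{k}}$.

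The heart of the argument is the classical \emph{focussing} (or ``snake'') construction. Working block by block, I would produce variable words $\tau_{k}$ over the alphabet $[t-1]\cup\{*_{k}\}$ of length $M_{k}$ with the following property: once $\tau_{1},\ldots,\tau_{k-1}$ have been substituted with $t-1$ and the remaining blocks $k+1,\ldots,r$ are filled with the constant string of $t$'s, replacing $*_{k}$ in $\tau_{k}$ by any value of $[t-1]$ gives one and the same $\chi$-colour $c_{k}$. The key refinement is that $c_{k}$ must be independent of the choices made in the other blocks, and this forces the bound $M_{k}=HJ(r^{N_{k}},t-1)$ where $N_{k}$ grows like an iterated exponential in $M_{1},\ldots,M_{k-1}$, because the ``colour'' that block $k$ perceives must encode the entire induced $r$-colouring of its $[t-1]$-substitutions.

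Once the $\tau_{k}$'s are in hand, I would form the $r+1$ words $u_{k}$ (for $k=0,1,\ldots,r$) obtained by substituting $t-1$ into $\tau_{1},\ldots,\tau_{k}$ and filling the remaining blocks with $t$. By the pigeonhole principle two of them, say $u_{i}$ and $u_{j}$ with $i<j$, must share a $\chi$-colour. Concatenating the single variables of $\tau_{i+1},\ldots,\tau_{j}$ into one master variable produces a variable word whose substitution by any $v\in[t-1]$ yields a word of the common colour (by the focussing property of each intermediate $\tau_{k}$), and whose substitution by $t$ yields $u_{i}$, again of the same colour. This is the required monochromatic combinatorial line over $[t]$.

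The main obstacle I expect is the bookkeeping required to keep each focussing colour $c_{k}$ independent of the as-yet-undetermined choices in the other blocks; this is precisely what dictates that the block lengths $M_{k}$ have to grow as iterated exponentials and what makes the proof longer than its statement suggests. Once the right inductive statement --- essentially the full Graham--Rothschild parameter-word version --- is in place, the final pigeonhole and the merging of parallel variables are routine, and only $r$ applications of the inductive hypothesis $HJ(\,\cdot\,,t-1)$ are invoked along the way.
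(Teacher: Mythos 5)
The paper does not actually prove this statement: Theorem \ref{Theorem 2.1} is quoted from Hales and Jewett \cite{key-5} and used as a black box, so there is no internal proof to compare yours against. Your proposal is the standard colour-focusing proof (induction on the alphabet size $t$, a decomposition into $r$ blocks, and a final pigeonhole among $r+1$ focused words), which is certainly a legitimate route to the theorem and the one usually presented.

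There is, however, one step in your sketch that fails as written. You define $u_{k}$ by substituting $t-1$ into $\tau_{1},\dots,\tau_{k}$ and filling blocks $k+1,\dots,r$ with the \emph{constant} string of $t$'s, and you then claim that substituting $t$ into the merged variable of $\tau_{i+1},\dots,\tau_{j}$ returns $u_{i}$. It does not: each $\tau_{k}$ is produced by an application of $HJ(\cdot,t-1)$ to a colouring of $[t-1]^{M_{k}}$, so it is a variable word over $[t-1]\cup\{*\}$ which in general has non-variable positions carrying letters of $[t-1]$; hence $\tau_{k}(t)\neq t^{M_{k}}$, and the word $\tau_{1}(t-1)\cdots\tau_{i}(t-1)\,\tau_{i+1}(t)\cdots\tau_{j}(t)\,t\cdots t$ is not $u_{i}$, so the pigeonhole on your $u_{k}$'s does not close the argument. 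The repair is standard but must be made: the $r+1$ words should be $f_{k}=\tau_{1}(t-1)\cdots\tau_{k}(t-1)\,\tau_{k+1}(t)\cdots\tau_{r}(t)$, with the trailing blocks filled by $\tau_{k+1}(t),\dots,\tau_{r}(t)$ rather than by constant $t$'s; then substituting $t$ into the merged variable gives $f_{i}$ and substituting any $v\in[t-1]$ gives a word of the colour of $f_{j}$. Relatedly, the ``full independence'' of $c_{k}$ from all other blocks cannot be arranged literally as stated (the number of colours block $k$ must absorb depends on the total length $N$, which depends on all the $M_{k}$'s, a circularity); the usual resolution is to choose the blocks in one fixed order so that each $\tau_{k}$ is invariant against arbitrary contents of the blocks chosen after it, which is exactly what the block-by-block chain of substitutions in the final step requires. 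With these two corrections your argument becomes the classical one.
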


In \cite{key-2}, authors proved Polynomial Hales-Jewett Theorem and
in \cite{key-9} it is proved combinatorially. 

The first Central Set Theorem was proved in \cite{key-4}. In \cite{key-1.1},
authors proved the Polynomial Central Set Theorem using IP van der
Waerden's Theorem. Before stating those results we need this definition:
\begin{defn}
\label{Definition 2.2} Given a map $f:H\rightarrow G$ between countable
commutative groups we say that $f$ is a polynomial map of degree
$0$ if it is constant. We say that $f$ is a polynomial map of degree
$d\in\mathbb{N}$, if it is not a polynomial map of degree $d-1$
and for every $h\in H$, the map $x\longrightarrow f\left(x+h\right)-f\left(x\right)$
is a polynomial of degree $\leq d-1$. Finally we denote by $\mathbb{P}\left(G,H\right)$
the set of all polynomial maps $f:G\rightarrow H$ with $f\left(0\right)=0$. 
\end{defn}

Note that homomorphisms are elements of $\mathbb{P}\left(G,H\right)$
having degree $1$.
\begin{thm}
\label{Theorem 2.3} Let $j\in\mathbb{N}$, let $G$ be a countable
abelian group and let $F$ be a finite family of polynomial maps from
$G^{j}$ to $G$ such that $f\left(0\right)=0$ for each $f\in F$.
Then for every piecewise syndetic set $A\subseteq G$ and every IP
set $\left\langle y_{\alpha}\right\rangle _{\alpha\in\mathcal{P}_{f}\left(\mathbb{N}\right)}$
in $G^{j}$ there exist $a\in A$ and $\alpha\in\mathcal{P}_{f}\left(\mathbb{N}\right)$
such that $a+f\left(y_{\alpha}\right)\in A$ for every $f\in F$.
\end{thm}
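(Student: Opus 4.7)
The plan is to derive the theorem from the Polynomial Hales--Jewett Theorem (PHJ) together with the standard characterization that a set $A \subseteq G$ is piecewise syndetic if and only if $A - T := \bigcup_{t \in T}(A - t)$ is thick for some finite $T \subseteq G$, where thick means that for every finite $B \subseteq G$ one can find $s \in G$ with $s + B \subseteq A - T$. Enlarge $F$ to $F' = F \cup \{0\}$, where $0 \in \mathbb{P}(G^j, G)$ is the constant zero polynomial; then a configuration monochromatic over $F'$ automatically yields the required base point $a \in A$ via the $f = 0$ case.

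The key tool extracted from PHJ is the following finitary IP--polynomial van~der Waerden statement: for every $r \in \mathbb{N}$ there exists $N = N(F', j, r) \in \mathbb{N}$ such that, for every IP set $\langle y_\alpha \rangle_{\alpha \in \mathcal{P}_f(\mathbb{N})}$ in $G^j$, every $s \in G$, and every $r$-coloring $\chi$ of the finite set $\{s + f(y_\alpha) : f \in F',\; \alpha \in \mathcal{P}_f([N])\}$, there exists $\alpha_0 \in \mathcal{P}_f([N])$ making $\chi(s + f(y_{\alpha_0}))$ take a single common value as $f$ ranges over $F'$.

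Granted this tool, the conclusion follows quickly. Let $T$ witness piecewise syndeticity of $A$, let $N$ correspond to $r = |T|$ above, and let $B_N = \{f(y_\alpha) : f \in F',\, \alpha \in \mathcal{P}_f([N])\}$. Thickness of $A - T$ furnishes $s \in G$ with $s + B_N \subseteq A - T$. For each point of $s + B_N$, pick some $t \in T$ witnessing its membership in $A - T$; this defines a $|T|$-coloring to which the finitary statement applies, producing $\alpha_0 \in \mathcal{P}_f([N])$ and a common value $c \in T$ with $s + f(y_{\alpha_0}) + c \in A$ for every $f \in F'$. Setting $a = s + c$, the $f = 0$ case gives $a \in A$, and for each $f \in F$ we obtain $a + f(y_{\alpha_0}) \in A$.

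The main obstacle is the finitary IP--polynomial van~der Waerden statement itself. One must convert PHJ's output --- a monochromatic polynomial combinatorial line in $[k]^N$ --- into a monochromatic additive configuration $\{s + f(y_{\alpha_0}) : f \in F'\} \subseteq G$ indexed by the prescribed IP structure. The delicate part is encoding each $f \in F'$ as the action of a combinatorial variable of appropriate degree and then unwinding the monochromatic line back into the indexing by $\mathcal{P}_f([N])$; once this is in place, the thickness step glues the combinatorial conclusion to the hypothesis that $A$ is piecewise syndetic.
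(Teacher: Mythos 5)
Your overall architecture (reduce to a finitary IP--polynomial van der Waerden statement via the Polynomial Hales--Jewett Theorem, then transfer to piecewise syndetic sets by writing $\bigcup_{t\in T}(-t+A)$ as a thick set and colouring by the witnessing translate $t$, with $0\in F'$ supplying the base point $a\in A$) is the same strategy the paper uses for its near-zero analogue of this result. However, your ``key tool'' is false as you have formulated it. You colour only the set $\{s+f(y_{\alpha}):f\in F',\ \alpha\in\mathcal{P}_{f}([N])\}$ and ask for $\alpha_{0}$ such that $\chi(s+f(y_{\alpha_{0}}))$ is constant in $f\in F'$. Since $0\in F'$, that constant value is forced to be $\chi(s)$, i.e.\ you are demanding a monochromatic polynomial configuration whose base point is the prescribed element $s$. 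No van der Waerden--type theorem delivers that: take $G=\mathbb{Z}$, $F'=\{0,\mathrm{id}\}$, any IP set with $y_{\alpha}>0$, and the $2$-colouring that isolates $s$ in its own colour class; then $s+y_{\alpha_{0}}$ never has the colour of $s$, so no $\alpha_{0}$ works for any $N$. The conclusion of the partition theorem must be ``there exist $a$ \emph{and} $\alpha_{0}$,'' with the base point $a$ produced by the theorem, not prescribed in advance.

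The repair is to colour a larger finite set that contains all candidate base points together with all configurations anchored at them --- this is exactly what the paper does in its proof of Theorem \ref{Theorem 2.6}, where the coloured set $B$ is the image under the map $C$ of the entire Polynomial Hales--Jewett cube $Q=[q]^{N}\times[q]^{N\times N}\times\cdots\times[q]^{N^{d}}$, so that the monochromatic line $\{a\oplus x_{1}\gamma\oplus\cdots\oplus x_{d}\gamma^{d}\}$ maps to a configuration $\{b+P(S_{\gamma})\}$ whose base point $b=C(a)$ is itself an element of $B$. Note also that your set $B_{N}=\{f(y_{\alpha})\}$ is not closed under the sums $g(y_{\beta})+f(y_{\alpha_{0}})$ that would arise from a shifted base point, which is another symptom of the same problem. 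With the corrected finitary lemma (finite $W$ such that every $r$-colouring of $W$ admits $a$ and $\alpha_{0}$ with $\{a+f(y_{\alpha_{0}}):f\in F'\}\subseteq W$ monochromatic), your thickness-and-recolouring step and the $f=0$ trick go through verbatim and do yield the theorem; but as written the central lemma you rely on --- and which you explicitly leave unproved --- is not a true statement.
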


\begin{thm}
\label{(IP polynomial van der Waerden theorem for abelian groups)}
\cite[Theorem 2.10]{key-1} Let $G,H$ be countable abelian groups
and let $F\subseteq\mathbb{P}\left(H,G\right)$ be a finite subset.
Then for every finite partition $G=C_{1}\cup\ldots\cup C_{r}$ and
every IP set $\left\langle y_{\alpha}\right\rangle _{\alpha\in F}$
in $H$ there exist $i\in\left\{ 1,\ldots,r\right\} $, $a\in C_{i}$
and $\alpha\in\mathcal{P}_{f}\left(\mathbb{N}\right)$ such that $a+f\left(y_{\alpha}\right)\in C_{i}$
for every $f\in\mathcal{P}_{f}\left(\mathbb{N}\right)$.
\end{thm}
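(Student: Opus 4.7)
The plan is to deduce this IP polynomial van der Waerden theorem from Theorem~\ref{Theorem 2.3} by the standard reduction of a partition-regularity statement to a piecewise-syndetic statement.

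First I would establish the pigeonhole step: for any finite partition $G = C_{1}\cup\cdots\cup C_{r}$ of a countable abelian group $G$, at least one cell $C_{i}$ is piecewise syndetic. This is classical and runs entirely inside the Stone-\v{C}ech compactification $\beta G$ of the discrete group $G$. The compact right topological semigroup $\beta G$ has a nonempty smallest two-sided ideal $K(\beta G)$; fix any $p\in K(\beta G)$. Since the $C_{i}$ partition $G$, exactly one $C_{i}$ belongs to $p$. By the usual characterization of piecewise syndetic sets as precisely the members of ultrafilters in $K(\beta G)$ (the analogue in $\beta G$ of Theorem~\ref{Theorem 1.3}), this $C_{i}$ is piecewise syndetic.

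Second, I would apply Theorem~\ref{Theorem 2.3} to $A := C_{i}$ together with the finite family $F\subseteq \mathbb{P}(H,G)$ and the given IP set $\langle y_{\alpha}\rangle_{\alpha\in\mathcal{P}_{f}(\mathbb{N})}$ in $H$. The conclusion delivers $a\in C_{i}$ and $\alpha\in\mathcal{P}_{f}(\mathbb{N})$ such that $a + f(y_{\alpha})\in C_{i}$ for every $f\in F$, which is exactly the statement of the theorem (taking $i$ as the index produced by the pigeonhole).

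The main obstacle, which is cosmetic but genuine, is that Theorem~\ref{Theorem 2.3} is written with domain $G^{j}$, whereas the present theorem allows an arbitrary countable abelian group $H$ as the domain of the polynomial maps. Two routes resolve this. Route (i): inspect the proof of Theorem~\ref{Theorem 2.3} (whether via the polynomial Hales--Jewett theorem or the Bergelson--Leibman polynomial IP-system framework); it uses only that the indexing group is a countable abelian group, so the argument applies verbatim with $H$ in place of $G^{j}$. Route (ii): observe that only the finitely generated subgroup of $H$ generated by the countably many singleton values $y_{\{n\}}$ actually enters the argument, and this subgroup embeds into some $G^{j}$ after enlarging $G$ by finitely many torsion factors if necessary, so one may quote Theorem~\ref{Theorem 2.3} as stated. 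Either way, no new combinatorial content beyond Theorem~\ref{Theorem 2.3} and the ultrafilter pigeonhole in $\beta G$ is needed.
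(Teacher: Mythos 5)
There is nothing in the paper to compare against: this theorem is imported verbatim from \cite{key-1} (Theorem 2.10 there) with no in-paper proof, so your proposal has to stand on its own. Its skeleton is the standard reduction from a piecewise-syndetic statement to a partition statement, and the first two steps are sound: some cell $C_{i}$ of the partition lies in an ultrafilter of the smallest ideal $K\left(\beta G\right)$ and is therefore piecewise syndetic, and applying Theorem~\ref{Theorem 2.3} to $A=C_{i}$ gives exactly the desired configuration. (You also correctly read through the typos in the statement: $\alpha\in F$ should be $\alpha\in\mathcal{P}_{f}\left(\mathbb{N}\right)$ and $f\in\mathcal{P}_{f}\left(\mathbb{N}\right)$ should be $f\in F$.)

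The genuine gap is your treatment of the domain mismatch between $G^{j}$ and a general countable abelian group $H$. Route (ii) fails: the subgroup of $H$ generated by the values $y_{\left\{ n\right\} }$, $n\in\mathbb{N}$, is countably generated, not finitely generated as you assert, and even a finitely generated abelian group need not embed in any finite power of the given $G$ (take $G=\bigoplus_{n\in\mathbb{N}}\mathbb{Z}/2\mathbb{Z}$ and $H=\mathbb{Z}$, or $G=\mathbb{Z}$ and $H=\mathbb{Q}$ with $y_{\left\{ n\right\} }=1/n!$); moreover ``enlarging $G$'' is not permitted, since Theorem~\ref{Theorem 2.3} requires the polynomials to land in the same group $G$ that carries the piecewise syndetic set. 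Route (i) is an appeal to the proof of Theorem~\ref{Theorem 2.3}, which this paper also does not supply (it is stated without citation or argument); and the usual logical order --- followed both in \cite{key-1} and in this paper's own development near zero, where Theorem~\ref{Theorem 2.6} is proved from the polynomial Hales--Jewett theorem and the piecewise-syndetic version is then derived from it --- runs in the opposite direction from yours, so deriving the partition version from the piecewise-syndetic version risks circularity unless you first give an independent proof of the latter for arbitrary domain $H$. To close the gap you would need to prove the $H$-version of Theorem~\ref{Theorem 2.3} directly, e.g.\ via the polynomial Hales--Jewett theorem as in \cite{key-1} or \cite{key-2}.
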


\begin{thm}
\label{(Multidimensional polynomial central sets theorem)} \cite[Theorem 3.8]{key-1}
Let $G$ be a countable commutative semigroup, let $j\in\mathbb{N}$,
and let $\left\langle y_{\alpha}\right\rangle _{\alpha\in\mathcal{F}}$
be an IP-set in $G^{j}$ . Let $F\subseteq\mathbb{P}\left(G^{j},G\right)$
and let $A\subseteq G$ be a central set (or, if $G=\mathbb{Z}^{n}$,
let $A$ be a $D$-set). Then there exist an IP-set $\left\langle x_{\beta}\right\rangle _{\beta\in\mathcal{F}}$
in $G$ and a sub-IP-set $\left\langle z_{\beta}\right\rangle _{\beta\in\mathcal{F}}$
of $\left\langle y_{\alpha}\right\rangle _{\alpha\in F}$ such that
\[
\forall f\in F\qquad\forall\beta\in\mathcal{F}\qquad x_{\beta}+f\left(z_{\beta}\right)\in A.
\]
\end{thm}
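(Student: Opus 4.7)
The plan is to bootstrap Theorem \ref{Theorem 2.3} into an infinite construction using the algebra of idempotent ultrafilters. Since $A$ is central, fix a minimal idempotent $p = p + p$ in $\beta G_{d}$ with $A \in p$ and set $A^{\star} = \{a \in A : -a + A \in p\}$. Routine ultrafilter arguments give $A^{\star} \in p$ and $-a + A^{\star} \in p$ for every $a \in A^{\star}$; since $p$ lies in the smallest ideal, every member of $p$ is piecewise syndetic, so Theorem \ref{Theorem 2.3} applies to any subset of $G$ that contains a member of $p$.

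I would then construct the IP-set generators $\langle x_{n} \rangle_{n \in \mathbb{N}}$ in $G$ (with $x_{\beta} = \sum_{n \in \beta} x_{n}$) and an increasing sequence $\alpha_{1} < \alpha_{2} < \cdots$ in $\mathcal{P}_{f}(\mathbb{N})$ (with $z_{n} = y_{\alpha_{n}}$ and $z_{\beta} = y_{\bigcup_{n \in \beta} \alpha_{n}}$) simultaneously by recursion on $n$, maintaining the invariant that $x_{\gamma} + f(z_{\gamma}) \in A^{\star}$ for every non-empty $\gamma \subseteq \{1, \ldots, n\}$ and every $f \in F$. At stage $n$, for each $\gamma \subseteq \{1, \ldots, n-1\}$ (with the convention $z_{\emptyset} = 0$) and each $f \in F$, introduce the shifted polynomial
\[
f^{\gamma}(w) = f(z_{\gamma} + w) - f(z_{\gamma}),
\]
which still belongs to $\mathbb{P}(G^{j}, G)$ because $f^{\gamma}(0) = 0$ and its degree is at most that of $f$.

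The piecewise syndetic set
\[
B = A^{\star} \cap \bigcap_{\emptyset \neq \gamma \subseteq \{1, \ldots, n-1\}, \, f \in F} \bigl(-(x_{\gamma} + f(z_{\gamma})) + A^{\star}\bigr)
\]
is a finite intersection of members of $p$, hence lies in $p$. Applying Theorem \ref{Theorem 2.3} to $B$ with the enlarged finite family $\widetilde{F} = \{f^{\gamma} : f \in F, \, \gamma \subseteq \{1, \ldots, n-1\}\}$ and the tail IP-set indexed by $\alpha \in \mathcal{P}_{f}(\mathbb{N})$ with $\min \alpha > \max \alpha_{n-1}$ produces $x_{n} \in B$ and $\alpha_{n}$ satisfying $x_{n} + f^{\gamma}(z_{n}) \in B$ for all $f \in F$ and all $\gamma$. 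Writing a general $\beta \subseteq \{1, \ldots, n\}$ containing $n$ as $\gamma \cup \{n\}$, the identity $x_{\beta} + f(z_{\beta}) = (x_{\gamma} + f(z_{\gamma})) + (x_{n} + f^{\gamma}(z_{n}))$ places the left side in $A^{\star}$ and preserves the invariant. The principal obstacle is precisely that the maps in $F$ are not homomorphisms, so that $f(z_{\beta}) \neq \sum_{n \in \beta} f(z_{n})$; the shift construction converts this defect into a controlled enlargement of the polynomial family at each stage, which Theorem \ref{Theorem 2.3} can absorb because each $f^{\gamma}$ still vanishes at $0$.
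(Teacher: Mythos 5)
Your argument is sound, and it is worth noting at the outset that the paper offers no proof of this statement: it is quoted verbatim from Bergelson--Johnson--Moreira \cite[Theorem 3.8]{key-1}. The only meaningful comparison is therefore with the proof of the paper's final theorem (the polynomial central set theorem near $0$), which runs on exactly the engine you describe: a recursion in which, at stage $n$, the polynomial family is enlarged by the shifts $T_{S_{n_{1}}}P\left(m\right)=P\left(m+S_{n_{1}}\right)$ --- your $f^{\gamma}$ --- and the return sets $-\left(x_{\gamma}+f\left(z_{\gamma}\right)\right)+A^{\star}$ are intersected before the one-step result (Theorem \ref{Theorem 2.3}) is applied again. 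The only substantive difference is bookkeeping: you work inside a single minimal idempotent $p$ and use $A^{\star}=\left\{ a\in A:-a+A\in p\right\} $ together with the fact that every member of $p$ is piecewise syndetic, whereas the paper replaces $p$ by the combinatorial characterization of centrality (Theorem \ref{Theorem 1.7}) and descends through a chain $A\supseteq A_{m_{1}}\supseteq A_{m_{2}}\supseteq\cdots$ of piecewise syndetic sets; your version is cleaner because the ultrafilter identity $-a+A^{\star}\in p$ for $a\in A^{\star}$ does automatically what the chain condition ``for all $x\in C_{n}$ there is $m$ with $C_{m}\subseteq-x+C_{n}$'' does by hand. Two small caveats: (i) your proof covers only the central-set alternative, not the parenthetical $D$-set case of the statement (members of an essential idempotent need not be piecewise syndetic, so Theorem \ref{Theorem 2.3} as stated does not reach them; one needs its density version); (ii) $F$ must be finite for Theorem \ref{Theorem 2.3} to apply, and forming $f^{\gamma}$ uses subtraction, so $G$ should really be (embedded in) a group --- but both points are already implicit in the paper's own formulations and do not affect the use made of the theorem here.
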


Let $\left(S,+\right)$ be a dense subsemigroup of $\left(\mathbb{R},+\right)$
containing $0$ such that $\left(S\cap\left(0,1\right),\cdot\right)$
is a subsemigroup of $\left(\left(0,1\right),\cdot\right)$. Also
let $\mathcal{P}$ is the set of all polynomial defined on $\mathbb{R}$.
For any set $X$, $\mathcal{P}_{f}\left(X\right)$ denotes the set
of all finite subset of $X$. Let $\left\langle S_{i}\right\rangle _{i=1}^{\infty}$
be a sequence and $FS\left(\left\langle S_{i}\right\rangle _{i=1}^{\infty}\right)$
is an IP set in $S\cap\left(0,1\right)$ and therefore convergent
and consequently $\lim S_{n}=0$ \cite[Theorem 3.1]{key-6}.
\begin{thm}
\label{Theorem 2.6} Let $\left(S,+\right)$ be a dense subsemigroup
of $\left(\mathbb{R},+\right)$ containing $0$ such that $\left(S\cap\left(0,1\right),\cdot\right)$
is a subsemigroup of $\left(\left(0,1\right),\cdot\right)$. Let $F$
be a set of finite polynomials each of which vanishes at $0$. Then
for any $r\in\mathbb{N}$ and $\epsilon>0$, if $S\cap\left(0,\epsilon\right)$
is $r$-colored and for any IP set $FS\left(\left\langle S_{i}\right\rangle _{i=1}^{\infty}\right)$
then one of the cells will contain a configuration of the form 
\[
\left\{ a+p\left(s_{n}\right):\text{ for some }\ a\in S,\,n\in\mathbb{N}\right\} .
\]
\end{thm}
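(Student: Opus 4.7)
The plan is to combine the algebraic structure of the smallest ideal $K\subseteq(0^{+},+)$ with polynomial van der Waerden techniques at appropriately chosen near-zero scales.

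First, I would reduce the $r$-coloring to a single combinatorially rich cell. Fix a minimal idempotent $q\in K$, which exists since $K$ is the smallest two-sided ideal of the compact right topological semigroup $0^{+}$. As $q\in 0^{+}$, the set $S\cap(0,\epsilon)\in q$, so by pigeonhole exactly one cell $A$ of the $r$-coloring lies in $q$. Thus $A$ is central near $0$ by Definition \ref{Definition 1.4}, and in particular piecewise syndetic near $0$ by Theorem \ref{Theorem 1.3}.

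Second, I would align the IP-set with the near-zero witnesses of $A$. By Definition \ref{Definition 1.2}, there are sequences $\langle F_{n}\rangle$ in $\mathcal{P}_{f}(S\cap(0,1/n))$ and $\langle\delta_{n}\rangle$ with $\delta_{n}\in(0,1/n)$ witnessing the piecewise syndeticity of $A$. Since $\sum_{i}S_{i}$ converges we have $S_{i}\to 0$, and because each polynomial in $F$ vanishes at $0$, $p(S_{\alpha})\to 0$ uniformly in $p\in F$ as $\min\alpha\to\infty$. Passing to a sub-IP-set of $\langle S_{i}\rangle$, I may assume that for a fixed large $n$, every polynomial image $p(S_{\alpha})$ of interest lies in $(0,\delta_{n})$, and (using the hypothesis that $(S\cap(0,1),\cdot)$ is a subsemigroup) that the relevant translates $a+p(S_{\alpha})$ remain in $S$.

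Third, I would apply polynomial van der Waerden (Theorem \ref{Theorem 2.3}) at this scale. Choosing $G\in\mathcal{P}_{f}(S)$ to contain the polynomial images just described, the witness $x\in(0,\mu)\cap S$ supplied by Definition \ref{Definition 1.2} induces an $|F_{n}|$-coloring of the relevant sums $S_{\alpha}$: each $x+p(S_{\alpha})$ must lie in $-t+A$ for some $t\in F_{n}$. Finding a single $\alpha$ and a single $t\in F_{n}$ such that $x+p(S_{\alpha})\in -t+A$ for every $p\in F$ simultaneously is then a finitary polynomial van der Waerden problem. Setting $a:=t+x\in S$ gives $\{a+p(S_{\alpha}):p\in F\}\subseteq A$, which is the configuration claimed.

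The main obstacle, I expect, is that Theorem \ref{Theorem 2.3} is stated for piecewise syndetic sets in ambient countable abelian groups, whereas our object is an intrinsically local coloring on $(0,\epsilon)$. Careful adaptation, most likely by invoking polynomial Hales-Jewett directly on sufficiently many IP-sums rather than citing Theorem \ref{Theorem 2.3} as a black box, will be required. A further subtlety is that the non-linearity of the polynomials interacts non-trivially with the translates $t\in F_{n}$, since $p(S_{\alpha})-t$ is not itself of the form $p(\,\cdot\,)$ when $\deg p\ge 2$; these translates must therefore be absorbed through the IP-set structure, not after the fact.
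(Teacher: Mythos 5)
Your proposal does not close the argument; the paper's proof of Theorem \ref{Theorem 2.6} is a direct Polynomial Hales--Jewett encoding, and that encoding is precisely the step you defer to ``careful adaptation'' without carrying it out. Concretely, the paper takes $\left[q\right]$ to be the coefficient alphabet of the polynomials in $F$, forms the cube $Q=\left[q\right]^{N}\times\left[q\right]^{N\times N}\times\cdots\times\left[q\right]^{N^{d}}$, chooses terms $C_{1},\ldots,C_{N}$ far enough along the IP-set $\left\langle S_{i}\right\rangle$ that every evaluation $\sum a_{i}c_{i}+\sum a_{i_{1}i_{2}}c_{i_{1}}c_{i_{2}}+\cdots$ lies in $\left(0,\epsilon\right)$ (this is where the hypothesis that $\left(S\cap\left(0,1\right),\cdot\right)$ is a multiplicative subsemigroup is used, so that these products stay in $S$), pulls the given $r$-coloring of $S\cap\left(0,\epsilon\right)$ back to $Q$ along this evaluation map, and applies Polynomial Hales--Jewett to obtain a monochromatic configuration which evaluates to $b+x_{1}S_{\gamma}+x_{2}S_{\gamma}^{2}+\cdots+x_{d}S_{\gamma}^{d}$ with $S_{\gamma}=\sum_{i\in\gamma}C_{i}$. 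None of the ultrafilter machinery appears in this proof; it is purely finitary.

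The route you actually sketch has two genuine defects. First, Theorem \ref{Theorem 2.3} is a statement about piecewise syndetic subsets of a countable abelian group in the global (discrete) sense, whereas here everything must happen inside $\left(0,\epsilon\right)$ with the ``near zero'' notion of Definition \ref{Definition 1.2}; establishing the localized analogue of Theorem \ref{Theorem 2.3} is not an adaptation of its statement but is essentially the content of the theorems that \emph{follow} Theorem \ref{Theorem 2.6} in the paper, which are derived \emph{from} it. So invoking it here is close to circular. Second, the obstruction you flag in your last paragraph --- that $x+p\left(S_{\alpha}\right)\in-t+A$ forces you to handle $t+x+p\left(S_{\alpha}\right)$ where the translate cannot be absorbed into a polynomial vanishing at $0$ when $\deg p\geq2$ --- is real, and your proposal offers no mechanism to resolve it; the finite coloring of the sums $S_{\alpha}$ by the choice of $t\in F_{n}$ must itself be fed into a Polynomial Hales--Jewett argument over the IP-set, which returns you to the encoding above. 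Your first step (locating a single cell in a minimal idempotent of $K$, hence central near $0$) is correct but does no work toward the conclusion: for a partition result of this type one only needs the pigeonhole on the induced coloring of the finite cube, not centrality.
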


\begin{proof}
Let $\left[q\right]$ be the set of co-efficients of the polynomials
in $F$ and $N\left(q,r,d\right)$ exists such tat for any $r$-coloring
of 
\[
Q=\left[q\right]^{N}\times\left[q\right]^{N\times N}\times\ldots\times\left[q\right]^{N^{d}}
\]
and $\gamma\in\mathcal{P}_{f}\left(\mathbb{N}\right)$, one has the
following configuration:
\[
\left\{ a\oplus x_{1}\gamma\oplus x_{2}\left(\gamma\times\gamma\right)\oplus\ldots\oplus x_{d}\gamma^{d}:1\leq x_{i}\leq q\right\} .
\]
 Let us choose $\left\{ C_{1},C_{2},\ldots,C_{q}\right\} $ from the
base sequence $S_{1}>S_{2}>S_{3}\ldots$ satisfying $\max B<\epsilon$,
where
\begin{align*}
B & =\left\{ \sum_{i=1}^{N}a_{i}c_{i}+\sum_{i_{1},i_{2}=1}^{N}a_{i_{1}i_{2}}c_{i_{1}}c_{i_{2}}+\ldots+\right.\\
 & \left.\sum_{i_{1},i_{2},\ldots,i_{d}=1}^{N}a_{i_{1}i_{2}\ldots i_{d}}c_{i_{1}}c_{i_{2}}\ldots c_{i_{d}}:\left(a_{1},\ldots,a_{N},a_{11},\ldots,a_{NN},\ldots a_{NN\ldots N}\right)\in Q\right\} 
\end{align*}
Now, 
\begin{align*}
C\left(a_{1},\ldots,a_{N},a_{11},\ldots,a_{NN},\ldots a_{NN\ldots N}\right) & =\sum_{i=1}^{N}a_{i}c_{i}\\
 & +\sum_{i_{1},i_{2}=1}^{N}a_{i_{1}i_{2}}c_{i_{1}}c_{i_{2}}+\ldots\\
 & +\sum_{i_{1},i_{2},\ldots,i_{d}=1}^{N}a_{i_{1}i_{2}\ldots i_{d}}c_{i_{1}}c_{i_{2}}\cdots c_{i_{d}}.
\end{align*}

Now any coloring of $S\cap\left(0,\epsilon\right)$ will induce a
$r$-coloring of $B$ by the map $C$ and then a $r$-coloring of
$Q$.

Then from polynomial Hales-Jewett Theorem the configuration 
\[
\left\{ a\oplus x_{1}\gamma\oplus x_{2}\left(\gamma\times\gamma\right)\oplus\ldots\oplus x_{d}\gamma^{d}:1\leq x_{i}\leq q\right\} 
\]
 is monochromatic. Now, 
\begin{align*}
C\left(a\oplus x_{1}\gamma\oplus x_{2}\left(\gamma\times\gamma\right)\oplus\ldots\oplus x_{d}\gamma^{d}\right) & =b+x_{1}\sum_{i\in\gamma}C_{i}+\\
 & x_{2}\left(\sum_{i\in\gamma}C_{i}\right)^{2}+\ldots+x_{d}\left(\sum_{i\in\gamma}C_{i}\right)^{d},\\
\end{align*}
 where $b\in\left(0,\epsilon\right)$. Now, one can add the zero polynomial
in $F$.

So, $\exists\,b\in\left(0,\epsilon\right)\cap S$ such that $\left\{ b+P\left(S_{\gamma}\right)\right\} $
is monochromatic.
\end{proof}
Above we actually proved this general version of the Theorem \ref{Theorem 2.6}:
\begin{thm}
Let $\left(S,+\right)$ be a dense subsemigroup of $\left(\mathbb{R},+\right)$
containing $0$ such that $\left(S\cap\left(0,1\right),\cdot\right)$
is a subsemigroup of $\left(\left(0,1\right),\cdot\right)$. Let $F$
be a set of finite polynomials each of which vanishes at $0$. Then
for any $r\in\mathbb{N}$, $\epsilon>0$, $\exists$ a finite set
$B\subseteq\left(0,\epsilon\right)$ such that for any $r$-coloring
of $B\cap S$ and for any IP set $FS\left(\left\langle S_{i}\right\rangle _{i=1}^{\infty}\right)$,
$\exists$ $b\in S$ and $\gamma\subseteq\mathcal{P}_{f}\left(\mathbb{N}\right)$
such that $\left\{ b+P\left(S_{\gamma}\right):S_{\gamma}=\sum_{i\in\gamma}C_{i}\right\} $
is monochromatic.
\end{thm}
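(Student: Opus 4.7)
The plan is to observe that the argument for Theorem~\ref{Theorem 2.6} in fact constructs the finite test set $B$ explicitly, and that the coloring hypothesis only needs to hold on $B \cap S$. The main tool remains the Polynomial Hales-Jewett Theorem, and the strategy is to transfer a coloring of a concrete arithmetic set to a coloring of the combinatorial space on which Polynomial Hales-Jewett acts.

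First, I would let $d$ be the maximal degree of the polynomials in $F$ and $q$ be an upper bound for the absolute values of all their coefficients. Polynomial Hales-Jewett then supplies an integer $N = N(q,r,d)$ such that every $r$-coloring of
\[
Q = [q]^{N} \times [q]^{N \times N} \times \cdots \times [q]^{N^{d}}
\]
admits a monochromatic polynomial combinatorial line of the form $\{a \oplus x_{1}\gamma \oplus x_{2}(\gamma \times \gamma) \oplus \cdots \oplus x_{d}\gamma^{d} : 1 \le x_{i} \le q\}$ for some $\gamma \in \mathcal{P}_{f}(\mathbb{N})$. Next, using that $\sum S_{i}$ converges (Theorem~\ref{Theorem 1.11}) and hence $S_{n} \to 0$, I would select consecutive generators $C_{1}, \ldots, C_{N}$ from a sufficiently deep tail of $\langle S_{i} \rangle$ so that the evaluation set
\[
B = \Bigl\{ \textstyle\sum_{i} a_{i}c_{i} + \sum_{i_{1},i_{2}} a_{i_{1}i_{2}}c_{i_{1}}c_{i_{2}} + \cdots + \sum a_{i_{1}\ldots i_{d}}c_{i_{1}}\cdots c_{i_{d}} : (a_{\bullet}) \in Q \Bigr\}
\]
satisfies $\max B < \epsilon$. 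Closure of $S$ under addition together with closure of $S \cap (0,1)$ under multiplication guarantees $B \subseteq S \cap (0,\epsilon)$, so this finite $B$ is the set promised by the statement.

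Given any $r$-coloring of $B \cap S = B$, the natural evaluation map $C : Q \to B$ pulls it back to an $r$-coloring of $Q$. Polynomial Hales-Jewett produces a monochromatic polynomial line in $Q$, and pushing it forward through $C$ yields a monochromatic configuration $\{b + x_{1}S_{\gamma} + x_{2}S_{\gamma}^{2} + \cdots + x_{d}S_{\gamma}^{d} : 1 \le x_{i} \le q\}$ with $S_{\gamma} = \sum_{i \in \gamma} C_{i}$. Adjoining the zero polynomial to $F$ puts $b$ itself in the monochromatic cell, and every $P \in F$ is realized by a specific choice of the $x_{i}$'s, so $\{b + P(S_{\gamma}) : P \in F\}$ is monochromatic as required.

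The main obstacle I expect is arranging $B \subseteq (0,\epsilon) \cap S$ simultaneously. Bounding $\max B$ rests on absolute summability of the IP-set---without this even the pure products $c_{i_{1}} \cdots c_{i_{d}}$ could not be made small. The inclusion $B \subseteq S$ uses crucially both semigroup hypotheses on $S$: the additive one for sums and positive integer multiples, the multiplicative one for the products. Once these structural points are verified, the coloring transfer and the invocation of Polynomial Hales-Jewett are essentially formal.
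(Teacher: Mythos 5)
Your proposal follows essentially the same route as the paper: the paper itself notes that its proof of Theorem \ref{Theorem 2.6} already establishes this general version, and that proof proceeds exactly as you describe --- bound the coefficients by $q$ and the degree by $d$, take $N$ from the Polynomial Hales--Jewett Theorem, choose $C_{1},\ldots,C_{N}$ from a deep enough tail of the IP set so that the evaluation set $B$ lies in $(0,\epsilon)$, pull the coloring back to $Q$ via the evaluation map, and push the resulting monochromatic polynomial line forward. Your added remarks on why $B\subseteq S$ (additive closure for sums, multiplicative closure of $S\cap(0,1)$ for products) and on the role of convergence of $\sum S_{i}$ are consistent with, and slightly more explicit than, the paper's own argument.
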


Let $G=B$ in Definition \ref{Definition 1.2} where $r=\left|\delta_{1}\right|$,
then $\exists\,x\in\left(0,\mu\right)\cap S$ such that, $\left(G\cap\left(0,\delta_{1}\right)\right)+x\subseteq\bigcup_{t\in F_{1}}\left(-t+A\right)$
and this shows $A$ contains a polynomial progression of the form
$b+a+P\left(S_{n}\right)\,\forall P\in F$ for some $S_{n}$ from
a given IP set. So, we conclude:
\begin{thm}
Let $\left(S,+\right)$ be a dense subsemigroup of $\left(\mathbb{R},+\right)$
containing $0$ such that $\left(S\cap\left(0,1\right),\cdot\right)$
is a subsemigroup of $\left(\left(0,1\right),\cdot\right)$. Let $F$
be any given system of finite polynomials each of which vanishes at
$0$. Then for any piecewise syndetic $A\subseteq S$, $\epsilon>0$
and for any IP set $FS\left(\left\langle S_{i}\right\rangle _{i=1}^{\infty}\right)$,
$\exists$ $a\in S\cap\left(0,\epsilon\right)$ and $n\subseteq\mathbb{N}$
such that $\left\{ a+P\left(S_{n}\right):P\in F\right\} \subseteq A$.
\end{thm}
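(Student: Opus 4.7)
The plan is to combine the general version of Theorem \ref{Theorem 2.6} just stated, which produces a finite ``target'' set $B\subseteq(0,\epsilon'')$ on which every $r$-coloring admits a monochromatic polynomial IP-configuration $\{b+P(S_{\gamma}):P\in F\}$, with the multi-shift structure of a piecewise syndetic near $0$ set encoded in Definition \ref{Definition 1.2}. The idea is that the finitely many translates $-t+A$ with $t\in F_N$ induce an $r$-coloring on any prescribed finite $G\subseteq S$ lying below the relevant $\delta_N$, and we simply take $G$ to be the Hales--Jewett target $B$.

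Concretely, fix $\epsilon>0$; by Definition \ref{Definition 1.2} choose $\langle F_n\rangle,\langle\delta_n\rangle$ witnessing that $A$ is piecewise syndetic near $0$. Pick $N$ with $1/N<\epsilon/3$ and set $r:=|F_N|$. Apply the general version at the scale $\epsilon'':=\min(\delta_N,\epsilon/3)$ and to the given IP-set to obtain a finite $B\subseteq(0,\epsilon'')$ such that every $r$-coloring of $B\cap S$ yields some $b\in S$ and some $\gamma\in\mathcal{P}_f(\mathbb{N})$ with $\{b+P(S_{\gamma}):P\in F\}\subseteq B\cap S$ monochromatic. Next invoke Definition \ref{Definition 1.2} with $G:=B$ and $\mu:=\epsilon/3$ to produce $x\in(0,\epsilon/3)\cap S$ with $B+x\subseteq\bigcup_{t\in F_N}(-t+A)$ (using $B\subseteq(0,\delta_N)$). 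Color each $c\in B\cap S$ by some choice $\chi(c)\in F_N$ such that $c+x+\chi(c)\in A$. The monochromatic configuration then yields a single $t\in F_N$ together with $b,\gamma$ such that $t+x+b+P(S_{\gamma})\in A$ for every $P\in F$. Setting $a:=t+x+b\in S$ completes the argument; adjoining the zero polynomial to $F$ (which vanishes at $0$) also forces $a\in A$, and the estimate
\[
a \;<\; 1/N+\epsilon/3+\epsilon/3 \;<\; \epsilon
\]
places $a\in S\cap(0,\epsilon)$.

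The main obstacle is exactly this final size bound: the three pieces $t$, $x$, and $b$ must be controlled independently. The smallness of $t$ is the reason Definition \ref{Definition 1.2} stipulates $F_n\subseteq(0,1/n)$, and is exploited by taking $N$ large; the smallness of $x$ is built into Definition \ref{Definition 1.2} through the free parameter $\mu$; and the smallness of $b$, together with the whole configuration lying in $(0,\delta_N)$, is secured by first shrinking the IP-generators $C_i$ drawn from $\langle S_i\rangle$ so that the Hales--Jewett target $B$ sits below $\delta_N$. This shrinking is legitimate because $FS(\langle S_i\rangle)$ converges to $0$, so we may pass to an arbitrarily late sub-IP-set without affecting the IP-structure required by the general version of Theorem \ref{Theorem 2.6}.
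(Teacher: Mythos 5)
Your proposal is correct and follows essentially the same route as the paper: take $G:=B$ (the finite Hales--Jewett target from the general form of Theorem \ref{Theorem 2.6}) in Definition \ref{Definition 1.2}, let the finitely many translates $-t+A$, $t\in F_N$, induce the $r$-coloring, and translate the resulting monochromatic configuration back into $A$. In fact your write-up is more careful than the paper's one-paragraph sketch (which works only with $n=1$ and never checks $a<\epsilon$): your choice of $N$ with $1/N<\epsilon/3$, of $\mu=\epsilon/3$, and of a late enough sub-IP-set so that $B\subseteq(0,\min(\delta_N,\epsilon/3))$, together with adjoining the zero polynomial, is exactly what is needed to place $a=t+x+b$ in $S\cap(0,\epsilon)$.
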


Now using a combinatorial characterization of central set \ref{Theorem 1.7},
we will deduce Polynomial Central Set Theorem for arbitrary semigroup
$\left(S,+\right)$.
\begin{thm}
Let $\left(S,+\right)$ be a dense subsemigroup of $\left(\mathbb{R},+\right)$
containing $0$ such that $\left(S\cap\left(0,1\right),\cdot\right)$
is a subsemigroup of $\left(\left(0,1\right),\cdot\right)$. Let $F$
be any given system of finite polynomials each of which vanishes at
$0$. Then for any central set $A\subseteq S$, $\epsilon>0$ and
for any IP sequence $\left\langle S_{i}\right\rangle _{i=1}^{\infty}$,
there exist an IP sequence $\left\langle H_{n}\right\rangle _{n=1}^{\infty}$in
$\left(0,\epsilon\right)$ and a sub IP sequence $\left\langle C_{i}\right\rangle _{i=1}^{\infty}$
of $\left\langle S_{i}\right\rangle _{i=1}^{\infty}$ such that 
\[
\left\{ H_{\alpha}+P\left(C_{\alpha}\right):P\in F,\alpha\in\mathcal{P}_{f}\left(\mathbb{N}\right)\right\} \subseteq A.
\]
\end{thm}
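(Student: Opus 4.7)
The plan is to combine the combinatorial characterization of central sets near zero (Theorem~\ref{Theorem 1.7}) with the polynomial piecewise syndetic theorem just established, via an inductive construction that simultaneously produces the generators $H_n$ of the IP sequence of shifts and the index sets $\beta_n\in\mathcal{P}_f(\mathbb{N})$ defining $C_n = S_{\beta_n}$ as a sub-IP of $\langle S_i\rangle_{i=1}^\infty$. Since $A$ is central near zero, Theorem~\ref{Theorem 1.7} supplies a decreasing chain $\langle D_n\rangle_{n=1}^\infty$ of subsets of $A$ such that $\{D_n:n\in\mathbb{N}\}$ is collectionwise piecewise syndetic near zero (so in particular each $D_n$ is piecewise syndetic near zero), and such that for every $n$ and every $x\in D_n$ there is $m$ with $D_m\subseteq -x+D_n$.

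The inductive invariant I would maintain after stage $n$ is that for every nonempty $\alpha\subseteq\{1,\ldots,n\}$ and every $P\in F$, the element $H_\alpha+P(C_\alpha)$ lies in some recorded $D_{\ell(\alpha,P)}\subseteq A$; in parallel I require $\min\beta_n>\max\beta_{n-1}$ (so that $\langle C_i\rangle$ becomes a sub-IP) and $H_n<\epsilon/2^n$ (so that $H_\alpha<\epsilon$ for every finite $\alpha$). The key algebraic ingredient is the Taylor-type identity $P(C_{\alpha'}+C_{n+1})=P(C_{\alpha'})+Q_{\alpha',P}(C_{n+1})$, where $Q_{\alpha',P}(y):=P(C_{\alpha'}+y)-P(C_{\alpha'})$ is a polynomial in $y$ that vanishes at $0$ (with the convention $C_\emptyset=0$, so $Q_{\emptyset,P}=P$). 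Consequently, for $\alpha=\alpha'\cup\{n+1\}$ with $\alpha'\subseteq\{1,\ldots,n\}$,
\[
H_\alpha+P(C_\alpha) \;=\; \bigl(H_{\alpha'}+P(C_{\alpha'})\bigr)+\bigl(H_{n+1}+Q_{\alpha',P}(C_{n+1})\bigr).
\]

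For the inductive step, the invariant places $H_{\alpha'}+P(C_{\alpha'})\in D_{\ell(\alpha',P)}$ for every nonempty $\alpha'$, and property~(1) of the chain $\langle D_n\rangle$ yields an index $m(\alpha',P)$ with $D_{m(\alpha',P)}\subseteq -(H_{\alpha'}+P(C_{\alpha'}))+D_{\ell(\alpha',P)}$. I pick $m^\ast$ dominating all such $m(\alpha',P)$ (finitely many in total). Since $D_{m^\ast}$ is piecewise syndetic near zero, I apply the polynomial piecewise syndetic theorem proved above to $D_{m^\ast}$, with $\epsilon'=\epsilon/2^{n+1}$, to the finite family $\{Q_{\alpha',P}:\alpha'\subseteq\{1,\ldots,n\},\,P\in F\}$ of polynomials vanishing at $0$, and to the tail IP set $FS(\langle S_i\rangle_{i>\max\beta_n})$. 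This produces $H_{n+1}\in S\cap(0,\epsilon/2^{n+1})$ and a finite $\beta_{n+1}\subseteq\{\max\beta_n+1,\max\beta_n+2,\ldots\}$ such that $H_{n+1}+Q_{\alpha',P}(S_{\beta_{n+1}})\in D_{m^\ast}$ for every $(\alpha',P)$. Setting $C_{n+1}=S_{\beta_{n+1}}$, the displayed identity places each new $H_\alpha+P(C_\alpha)$ into $D_{\ell(\alpha',P)}\subseteq A$, restoring the invariant with updated indices.

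The main obstacle is essentially the bookkeeping of chain indices rather than a genuinely delicate step: at each stage $m^\ast$ must simultaneously dominate the finitely many witnesses $m(\alpha',P)$ produced from the current invariant, and the polynomial piecewise syndetic theorem must be invoked on the correctly shifted tail of $\langle S_i\rangle$ in order to preserve the sub-IP requirement. Once these details are arranged, the inductively constructed sequences $\langle H_n\rangle$ in $(0,\epsilon)$ and sub-IP $\langle C_n\rangle$ of $\langle S_i\rangle$ together witness $\{H_\alpha+P(C_\alpha):P\in F,\ \alpha\in\mathcal{P}_f(\mathbb{N})\}\subseteq A$.
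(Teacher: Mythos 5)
Your proposal is correct and follows essentially the same route as the paper's proof: extract the decreasing chain of piecewise syndetic sets from Theorem~\ref{Theorem 1.7}, and inductively apply the polynomial piecewise syndetic theorem to an enlarged family of shifted polynomials, using the return-set property of the chain to absorb the new increments. Your normalization $Q_{\alpha',P}(y)=P(C_{\alpha'}+y)-P(C_{\alpha'})$ is in fact the correct form of the paper's operator $T_{S_{n_1}}P$ (which as literally written need not vanish at $0$), and your explicit handling of the tail IP set and the $\epsilon/2^{n+1}$ bounds tidies up bookkeeping the paper leaves implicit.
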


\begin{proof}
As $A$ is central, thus there exists a chain of piecewise syndetic
sets 
\[
A\supseteq A_{1}\supseteq A_{2}\supseteq A_{3}\supseteq\ldots
\]
 satisfying theorem \ref{Theorem 1.7}.

Now as $A_{1}$ is piecewise syndetic, then for $\epsilon=1,\ \exists\,a_{1}\in S\cap\left(0,1\right)$
and $n_{1}\in\mathbb{N}$ such that $\left\{ a_{1}+P\left(S_{n_{1}}\right):P\in F\right\} \subseteq A_{1}$.
Now, 
\[
\bigcap_{P\in F}-\overline{a_{1}+P\left(S_{n_{1}}\right)}+A_{1}\supseteq A_{m_{1}}\text{ for some }m_{1}\in\mathbb{N}.
\]

So, let $F_{1}=\left\{ P,T_{S_{n_{1}}}P:P\in F\right\} $, where $T_{S_{n_{1}}}P$
is a polynomial defined by $T_{S_{n_{1}}}P\left(m\right)=P\left(m+S_{n_{1}}\right)$,
$m\in\mathbb{N}\setminus S_{n_{1}}$.

Again as $A_{m_{1}}$ is piecewise syndetic and $\epsilon=\frac{1}{2},\ \exists\,a_{2}\in S\cap\left(0,\frac{1}{2}\right)$
and $n_{2}\in\mathbb{N}$ such that $\left\{ a_{2}+P\left(S_{n_{2}}\right):P\in F_{1}\right\} \subseteq A_{m_{1}}$
which implies 

\[
\left\{ a_{2}+P\left(S_{n_{2}}\right):P\in F\right\} \subseteq A_{m_{1}}\subseteq A
\]
 and 
\[
a_{2}+T_{S_{n_{1}}}P\left(S_{n_{2}}\right)\subseteq A_{m_{1}}\subseteq\bigcap_{P\in F_{1}}\overline{-a_{1}+P\left(S_{n_{1}}\right)}+A_{1}
\]
for $p\in F$

Therefore, 
\[
\left(a_{1}+a_{2}\right)+P\left(S_{n_{1}}+S_{n_{2}}\right)\subseteq A_{1}\subseteq A
\]

which implies $a_{\alpha}+P\left(S_{n_{\alpha}}\right)\subseteq A$
for $\alpha\in\mathcal{F}\left(\left\{ 1,2\right\} \right)$.

Now, 
\begin{align*}
\left(\bigcap_{P\in F_{1}}-\overline{a_{1}+P\left(S_{n_{1}}\right)}+A_{m_{1}}\right)\cap\left(\bigcap_{P\in F}-\overline{a_{2}+P\left(S_{n_{2}}\right)}+A_{m_{1}}\right)\cap & \text{ }\\
\left(\bigcap_{P\in F}-\overline{\left(a_{1}+a_{2}\right)+P\left(S_{n_{1}}+s_{n_{2}}\right)}+A_{m_{1}}\right)\supseteq A_{m_{2}}
\end{align*}
for some $m_{2}\in\mathbb{N}.$

Where, $F_{2}=\left\{ P,T_{S_{n_{1}}}P,T_{S_{n_{2}}}P,T_{s_{n_{1}}+S_{n_{2}}}P:P\in F\right\} $
and as $A_{m_{2}}$ is piecewise syndetic, then for $\epsilon=\frac{1}{2^{2}},\text{ there exist }a_{3}\in S\cap\left(0,\frac{1}{2^{2}}\right)$
and $n_{3}\in\mathbb{N}$ such that $\left\{ a_{3}+P\left(S_{n_{3}}\right):P\in F_{2}\right\} \subseteq A_{m_{2}}$
which implies 
\[
\left\{ a_{\alpha}+P\left(S_{n_{\alpha}}\right):P\in F\right\} \subseteq A_{m_{2}}\subseteq A
\]
where $\alpha\in\mathcal{F}\left(\left\{ 1,2,3\right\} \right)$.

And iteratively the proof follows.
\end{proof}

\end{document}